\documentclass[11pt]{article}

\usepackage{latexsym,amsfonts,amsmath,amsthm,graphics}
\usepackage{times}
\usepackage{epsfig}
\usepackage{algorithm}
\usepackage{bbm}
\usepackage{algpseudocode}
\usepackage{color} % For Textcolor 
\usepackage[a4paper,dvips]{geometry}
\usepackage{tikz}
\usetikzlibrary{arrows.meta}
\usetikzlibrary{decorations.pathreplacing,angles,quotes,calligraphy}

\usepackage{capt-of}

\newtheorem{theorem}{{\sc Theorem}}
\newtheorem{lemma}{{\sc Lemma}}
\newtheorem{corollary}{{\sc Corollary}}
\newtheorem{remark}{{\sc Remark}}

\newtheorem{proposition}{{\sc Proposition}}

%%%%%% different sets of numbers

\newcommand{\RR}{\mathbb{R}}

\newcommand{\cS}{{\mathcal S}}
\newcommand{\cP}{{\mathcal P}}
\newcommand{\cL}{{\mathcal L}}

%%%%%%%%%%%
\newcommand{\x}{{\bf x}}
\newcommand{\bv}[1]{\mathbf{#1}}
\newcommand{\bx}{{\mathbf{x}}}
\newcommand{\vol}{ \mathrm{vol}}
\newcommand{\dd}{\mathrm{d}}

%%%%%% Coloured Comments

\begin{document}

%**************** Title ****************%
\title{On the expected $\mathcal{L}_2$-discrepancy of stratified samples from parallel lines}

%*************** Authors ***************%
\author{Florian Pausinger}
%\affil{Queen's University Belfast, Belfast, United Kingdom.}
%\email{f.pausinger@qub.ac.uk}
\date{Queen's University Belfast, United Kingdom \\[3pt] \today}
\maketitle
%\noindent 

\begin{abstract}
We study the expected $\mathcal{L}_2$-discrepancy of stratified samples generated from special equi-volume partitions of the unit square. The partitions are defined via parallel lines that are all orthogonal to the diagonal of the square. It is shown that the expected discrepancy of stratified samples derived from these partitions is a factor 2 smaller than the expected discrepancy of the same number of i.i.d uniformly distributed random points in the unit square. We conjecture that this is best possible among all partitions generated from parallel lines.\\
{\bf MSC2020:} 11K38, 05A18, 60C05 \hspace{1cm}{\bf Keywords:} Discrepancy; stratified sampling.
\end{abstract}

%%%%%%%%%%%%%%%%%%%%%%%%
\section{Introduction}
\paragraph{Context.} Given a partition $\mathbf{ \Omega}=( \Omega_1, \ldots, \Omega_N)$ of $[0,1]^d$ into $N$ sets of positive Lebesgue measure.
A stratified sample, $P_{\mathbf{ \Omega}}$, is a set of $N$ points such that the $i$-th point is sampled independently and uniformly from the $i$-th set, $\Omega_i$, of the partition. Classical jittered sampling is an example of a stratified point set in which the unit cube is partitioned into $N=m^d$ axis-aligned cubes of volume $1/N$ for an integer $m$; we denote such as set with $P_{\mathrm{jit}}$.

The so-called $\mathcal{L}_2$-discrepancy of a finite set of points $\mathcal{P} = \left\{ \bx_1, \dots, \bx_N\right\}$ is a well-studied measure for the irregularities of the distribution of a point set. It is defined as
$$ \mathcal{L}_{2} (\mathcal{P}) := \left( \int_{[0,1]^d} \left| \frac{\#\left(\cP\cap[0, {\bv x}[\right)}{N} - \big|[0, \bv x[\big| \right|^2 \dd\bv x \right)^{1/2},$$
in which $\#\left(\cP\cap[0, {\bv x}[\right)$ counts the number of indices $1\leq i \leq N$ such that $\bx_i \in [0, {\bv x}[$,
and  $\big|[0, \bv x[\big|$ is the Lebesgue measure of 
$[0,\bv x[ :=\prod_{k=1}^d [0,x_k[$
with $\bv x = (x_1, \ldots, x_d)$; i.e. the $\cL_2$ norm of the so-called discrepancy function. Analogously, the more general $\mathcal{L}_p$-discrepancy is defined as the $\cL_p$ norm of the discrepancy function.
For an infinite sequence $\cS$ the $\cL_2$-discrepancy $\cL_{2}(\cS_N)$ is the $\cL_2$-discrepancy
of the first $N$ elements, $\cS_N$, of $\cS$.
We refer to the book \cite{DP} and the survey \cite{DP2} for further details. In particular, and in contrast to other measures such as the star-discrepancy, it is known how to construct deterministic point sets with the optimal order of magnitude of the $\cL_2$-discrepancy; see \cite{chen1,DP2,DP3}. 
For $d=2$ the optimal order of the $\cL_2$-discrepancy for finite point sets is known to be $\mathcal{O}(\sqrt{\log N}/N)$, which already goes back to a result of Davenport \cite{dave}.
The optimality of these constructions follows from a seminal result of Roth \cite{roth} who derived a general lower bound for the $\cL_2$-discrepancy of arbitrary sets of $N$ points in $[0,1]^d$; see e.g. \cite[Theorem 3.20]{DP}.

To put our results into context, the expected star discrepancy of a set $\cP_N$ of $N$ i.i.d.~uniform random points in $[0,1]^d$ is of order $\mathcal{O}(1/\sqrt{N})$ and as such independent of the dimension; see \cite{hnww} for the first upper bound, \cite{aist} for the first upper bound with explicit constant and \cite{doerr1} for the first lower bound as well as \cite{gnewuch} for the current state of the art results in this context.
An upper bound of this order was also calculated for the $\cL_2$-discrepancy in \cite{MKFP21}.
For two-dimensional point sets of $N$ i.i.d.~uniform random points, we thus also have an expected discrepancy of order $\mathcal{O}(1/\sqrt{N})$ similar to the two-dimensional regular grid (whose discrepancy is known to get worse as the dimension increases). 

\paragraph{Results. }
Our work is motivated by two results. First, the strong partition principle \cite[Theorem 1]{MKFP21} implies that stratified samples derived from an arbitrary equi-volume partition \emph{always} have a smaller expected $\cL_p$-discrepancy than random sets $\cP_N$.
Second, it is known that classical jittered sampling improves the order of magnitude of the expected discrepancy \cite{NF23}, i.e., we have for $N=m^d$ that
$$\mathbb{E}\mathcal{L}_2^2(\mathcal{P}_{\mathrm{jit}}) = \frac{1}{m^{2d}} \left[ \left( \frac{m}{2} \right)^d - \left( \frac{m}{2}-\frac{1}{6} \right)^d \right].$$
In particular, for $d=2$ and using Jensen's inequality, it follows that
$$\mathbb{E}(\mathcal{L}_2(\mathcal{P}_{\mathrm{jit}})) = \mathcal{O}\left( N^{-3/4}  \right).$$
While the strong partition principle tells us that any partition will improve the discrepancy, the result about jittered sampling says that certain partitions can actually make a huge difference. 
However, jittered sampling is restricted to sets with $N=m^d$ points and, in practice, it is often desired to have a construction that works for arbitrary $N$.

In \cite[Lemma 2]{MKFP21} it was shown that among all partitions of $[0,1]^2$ into two sets, the partition that splits the square into two triangles with the anti-diagonal being the dividing line minimizes the expected $\mathcal{L}_2$-discrepancy; see Figure \ref{fig:two}.

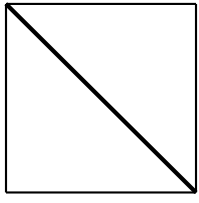
\begin{figure}[h!]
\begin{center}
\begin{tikzpicture}[scale=2.5]

\draw [thick] (4,0) -- (5,0);
\draw [thick] (5,1) -- (5,0);
\draw [thick] (4,1) -- (5,1);
\draw [thick] (4,0) -- (4,1);
\draw [ultra thick] (4,1) -- (5,0);

\end{tikzpicture}
\caption{The partition of the unit square into two convex sets with smallest expected $\mathcal{L}_2$-discrepancy} \label{fig:two}
\end{center}
\end{figure}

Motivated by this an infinite family of equivolume partitions, $\Omega_N$, was defined by hyperplanes orthogonal to the diagonal of the unit cube; see Figure \ref{fig:simplePartition} and Section \ref{sec:part} for details. We denote a stratified sample generated from $\Omega_N$ with $\mathcal{P}_{\Omega_N}$.
Based on extensive numerical results it was conjectured that
$$\lim_{N \rightarrow \infty} \frac{\mathbb{E}\mathcal{L}_2^2(\mathcal{P}_N)}{\mathbb{E}\mathcal{L}_2^2(\mathcal{P}_{\Omega_N})} = \frac{d}{d-1}.$$
For $d=2$ this is due to Kiderlen \& Pausinger \cite[Conjecture 2]{MKFP21}. The general case is due to Kirk \cite[Conjecture 3.8]{NThesis}. While the general case seems out of reach at the moment, the main result of this note is to prove this conjecture for $d=2$.

\begin{theorem} \label{thm:main}
Let $N$ be even and let $\Omega_N$ be the equivolume partition of $[0,1]^2$ defined via the generating set \eqref{genSet}, then
$$\mathbb{E}\mathcal{L}_2^2(\mathcal{P}_{\Omega_N}) = \frac{5}{72N} + \mathcal{O}(N^{-3/2})$$
\end{theorem}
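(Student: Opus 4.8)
\medskip
\noindent\emph{Proof proposal.}
The plan is to turn $\mathbb{E}\mathcal{L}_2^2(\mathcal{P}_{\Omega_N})$ into a purely deterministic geometric quantity and then read off its leading term from the thin-strip geometry of $\Omega_N$. Since the $N$ sample points are independent and the $i$-th point is uniform on $\Omega_i$ with $|\Omega_i| = 1/N$, the discrepancy function has mean zero at every $\bx$, and writing $V_i(\bx) := |[0,\bx[\cap \Omega_i|$ one computes $\mathbb{E}[D^2(\bx)] = \mathrm{Var}(D(\bx)) = \frac1N\sum_i V_i(\bx) - \sum_i V_i(\bx)^2$, using independence for the variance and $\sum_i V_i(\bx) = x_1 x_2$. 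Integrating over $[0,1]^2$ with $\int x_1 x_2\, d\bx = 1/4$ gives
$$\mathbb{E}\mathcal{L}_2^2(\mathcal{P}_{\Omega_N}) = \frac1{4N} - \sum_{i=1}^N \int_{[0,1]^2} V_i(\bx)^2\, d\bx =: \frac1{4N} - S,$$
so everything reduces to showing $S = \frac{13}{72N} + \mathcal{O}(N^{-3/2})$ (note $\frac14 - \frac{13}{72} = \frac5{72}$).

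Next I would rewrite $S$ as an integral over pairs of points. Expanding $V_i(\bx)^2 = \int_{\Omega_i}\int_{\Omega_i}\mathbbm{1}[\bv w \le \bx]\,\mathbbm{1}[\bv w' \le \bx]\, d\bv w\, d\bv w'$ and integrating in $\bx$ first,
$$\int_{[0,1]^2} V_i(\bx)^2\, d\bx = \int_{\Omega_i}\int_{\Omega_i} (1-\max(w_1,w_1'))(1-\max(w_2,w_2'))\, d\bv w\, d\bv w'.$$
I would then change to the coordinates $s = w_1 + w_2$, $t = w_1 - w_2$ adapted to the diagonal: $\Omega_i$ becomes $\{(s,t) : c_{i-1}\le s\le c_i,\ |t|\le \ell(s)\}$ with $\ell(s) = \min(s, 2-s)$, the Jacobian is $\tfrac12$, and the double integral equals $\tfrac14 \int_{[c_{i-1},c_i]^2} \Phi(s,s')\, ds\, ds'$ for an explicit, piecewise-polynomial, Lipschitz kernel $\Phi$ on $[0,2]^2$. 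On the diagonal, $\Phi(s,s) = \phi(s)$ reduces to a one-variable integral evaluating to $\phi(s) = 4 a(s)^2 \ell(s)^2 - \tfrac43 a(s)\ell(s)^3$ with $a(s) = 1 - s/2$ (the only nontrivial input being $\int\!\!\int_{[-\ell,\ell]^2} |t-t'|\, dt\, dt' = \tfrac83 \ell^3$). Thus $S = \tfrac14 \sum_{i} \int_{[c_{i-1},c_i]^2}\Phi$.

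Now comes the asymptotic extraction. Because $N$ is even, $c_{N/2} = 1$ is one of the cut values, so $\ell$ is affine on every interval $[c_{i-1},c_i]$; combined with $\int_{c_{i-1}}^{c_i}\ell(s)\, ds = |\Omega_i| = 1/N$ and exactness of the midpoint rule for affine integrands this yields the identity $\delta_i\,\ell(\sigma_i) = 1/N$, where $\delta_i = c_i - c_{i-1}$ and $\sigma_i$ is the midpoint of $[c_{i-1},c_i]$. Replacing $\Phi$ on the square $[c_{i-1},c_i]^2$ by $\phi(\sigma_i)$ costs $\mathcal{O}(\delta_i^3)$ per square by Lipschitz continuity; since $\delta_k = \sqrt{2/N}\,(\sqrt k - \sqrt{k-1}) \le \sqrt{2/(Nk)}$ for $k \le N/2$ (and symmetrically above), one gets $\sum_i \delta_i^3 = \mathcal{O}(N^{-3/2})$, which is exactly the error budget. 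Using $\delta_i \ell(\sigma_i) = 1/N$, the sum $\tfrac14\sum_i \delta_i^2 \phi(\sigma_i)$ collapses to $\tfrac1N \sum_i \delta_i\, (\ell F)(\sigma_i)$ with $F = a^2 - \tfrac13 a\ell$, and a further (cheaper) midpoint estimate, valid because $\ell F$ is polynomial on each $[c_{i-1},c_i]$, identifies this with $\tfrac1N\int_0^2 \ell(s) F(s)\, ds = \tfrac1N\int_0^2\big(a(s)^2\ell(s) - \tfrac13 a(s)\ell(s)^2\big)\, ds$, the total error still being $\mathcal{O}(N^{-3/2})$. Splitting the integral at $s = 1$ (where $a = \ell/2$ on $[1,2]$ by the symmetry $s\mapsto 2-s$) evaluates it to $\tfrac{23}{144} + \tfrac1{48} = \tfrac{13}{72}$, whence $S = \tfrac{13}{72N} + \mathcal{O}(N^{-3/2})$ and the theorem follows.

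The algebra — the Fubini identity, the closed form for $\phi$, the final one-variable integral — is routine. The main obstacle is the error analysis in the last step: one must show that replacing $\Phi$ by its restriction to the diagonal, and the resulting Riemann sums by their integrals, costs only $\mathcal{O}(N^{-3/2})$ uniformly over strips whose widths $\delta_i$ degenerate like $N^{-1/2}$ near the two corners of the square, so that the corner strips sit exactly at the $N^{-3/2}$ threshold and no worse. Evenness of $N$ enters precisely here, through $c_{N/2} = 1$, to keep the main term clean (affine $\ell$ on each subinterval and the exact identity $\delta_i\ell(\sigma_i) = 1/N$).
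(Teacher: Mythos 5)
Your proposal is correct, and it reaches the theorem by a genuinely different route than the paper. Both arguments start from the same identity $\mathbb{E}\mathcal{L}_2^2(\mathcal{P}_{\Omega_N}) = \frac{1}{4N} - \sum_i \int_{[0,1]^2} V_i(\bx)^2\,\dd\bx$ (your variance computation is exactly Proposition 3 of \cite{MKFP22} specialised to equivolume cells), but from there the paper evaluates each $Q_i = N^2\int V_i^2$ in closed form via the Cho--Kim formula for the area of a box clipped by a half-plane, runs through an elaborate case analysis ($i=1$, $i=N$, $2\le i\le N/2$, $N/2<i<N$, with sub-cases A--D), and then sums the resulting expressions using Euler--Maclaurin estimates for $\sum i^k\sqrt{i-1}$ developed to third order precisely so that spurious $N^3$ and $N^2$ terms cancel. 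You instead apply the Warnock-type pair-correlation identity to write $\int V_i^2$ as a double integral over $\Omega_i\times\Omega_i$, rotate to diagonal coordinates, and reduce everything to a midpoint Riemann sum for the one-dimensional integral $\int_0^2\bigl(a(s)^2\ell(s)-\tfrac13 a(s)\ell(s)^2\bigr)\dd s = \tfrac{13}{72}$; I checked the kernel $\phi(s)=4a^2\ell^2-\tfrac43 a\ell^3$, the exact identity $\delta_i\ell(\sigma_i)=1/N$ (which needs $\ell$ affine on each cell, i.e.\ $N$ even so that $c_{N/2}=1$ is a cut point --- the same place evenness enters the paper's case split), the error budget $\sum_i\delta_i^3=\mathcal{O}(N^{-3/2})$, and the final arithmetic $\tfrac{23}{144}+\tfrac{1}{48}=\tfrac{13}{72}$, $\tfrac14-\tfrac{13}{72}=\tfrac5{72}$; all are right. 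Your route buys transparency: the cancellation of leading terms that the paper must engineer by hand never arises, the error analysis is a standard midpoint bound, and the argument would adapt to other profiles $\ell$. What it gives up is the paper's exact closed-form expressions for each $Q_i$ at finite $N$, which are what feed the numerical tables. To turn your sketch into a full proof you would only need to write out the uniform Lipschitz bound for $\Phi$ (boundary plus interior terms when differentiating under the $t$-integrals), which is routine.
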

 
The main ingredients of our proof are a method for the calculation of the discrepancy of stratified samples \cite[Proposition 3]{MKFP22}, the Euler-MacLaurin formula \cite{apostol} as well as a formula for the calculation of the volume of polygons/polytopes given as the intersection of half-spaces taken from \cite{cho}.

\begin{remark}
We restricted our proof to the case of $N$ even for simplicity and comment on the case $N$ odd at the end of the paper.
\end{remark}
 
\begin{corollary} Let $N$ be even and let $\Omega_N$ be the equivolume partition of $[0,1]^2$ defined via the generating set \eqref{genSet}, then (by Jensen's inequality)
$$\mathbb{E}\mathcal{L}_2(\mathcal{P}_{\Omega_N}) \leq \sqrt{\frac{5}{72N}} + \mathcal{O}\left(N^{-3/4}\right). $$
\end{corollary}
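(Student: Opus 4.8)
The plan is to deduce the Corollary directly from Theorem \ref{thm:main} via the concavity of the square root, so that no new estimate is needed beyond a careful bookkeeping of the error term. First I would apply Jensen's inequality in the form valid for concave functions: since $t \mapsto \sqrt{t}$ is concave on $[0,\infty)$ and $\mathcal{L}_2(\mathcal{P}_{\Omega_N}) = \sqrt{\mathcal{L}_2^2(\mathcal{P}_{\Omega_N})}$ with $\mathcal{L}_2^2(\mathcal{P}_{\Omega_N}) \geq 0$ pointwise, we obtain
$$\mathbb{E}\mathcal{L}_2(\mathcal{P}_{\Omega_N}) = \mathbb{E}\sqrt{\mathcal{L}_2^2(\mathcal{P}_{\Omega_N})} \leq \sqrt{\mathbb{E}\mathcal{L}_2^2(\mathcal{P}_{\Omega_N})}.$$

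Next I would substitute the asymptotic expansion furnished by Theorem \ref{thm:main}, writing $\mathbb{E}\mathcal{L}_2^2(\mathcal{P}_{\Omega_N}) = \tfrac{5}{72N} + R_N$ with $|R_N| \leq C N^{-3/2}$ for some constant $C$ and all sufficiently large even $N$. The task then reduces to estimating $\sqrt{\tfrac{5}{72N} + R_N}$, and the key observation is that the big-$O$ term should be passed through the root using subadditivity rather than a first-order Taylor expansion. Concretely, using monotonicity of the square root followed by the inequality $\sqrt{a+b} \leq \sqrt{a} + \sqrt{b}$ for nonnegative $a,b$, I would bound
$$\sqrt{\tfrac{5}{72N} + R_N} \leq \sqrt{\tfrac{5}{72N} + |R_N|} \leq \sqrt{\tfrac{5}{72N}} + \sqrt{|R_N|} \leq \sqrt{\tfrac{5}{72N}} + \sqrt{C}\, N^{-3/4}.$$
This treats both signs of $R_N$ uniformly and produces exactly the stated exponent, since $\sqrt{C N^{-3/2}} = \sqrt{C}\, N^{-3/4}$; combining it with the Jensen step above yields the claimed inequality.

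I do not anticipate any genuine obstacle, as every step is elementary once Theorem \ref{thm:main} is in hand. The only point deserving a moment's attention is the choice to route the remainder through subadditivity: a Taylor estimate of the form $\sqrt{a+b} = \sqrt{a}\,(1 + b/(2a) + \cdots)$ would in fact yield the sharper remainder $\mathcal{O}(N^{-1})$, but the weaker and more transparent $\mathcal{O}(N^{-3/4})$ recorded in the statement is all that is required here and follows at once from $\sqrt{a+b} \leq \sqrt{a} + \sqrt{b}$.
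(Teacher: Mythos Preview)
Your proposal is correct and follows exactly the approach the paper indicates: the corollary is stated with ``(by Jensen's inequality)'' and given no separate proof, so the intended argument is precisely the concavity step $\mathbb{E}\mathcal{L}_2 \leq \sqrt{\mathbb{E}\mathcal{L}_2^2}$ followed by substitution of Theorem~\ref{thm:main}. Your handling of the remainder via $\sqrt{a+b}\leq\sqrt{a}+\sqrt{b}$ is a clean way to make the $\mathcal{O}(N^{-3/4})$ explicit, and your remark that a Taylor expansion would actually give $\mathcal{O}(N^{-1})$ is correct but not needed for the stated bound.
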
 
\noindent Moreover, it was calculated in \cite{MKFP21} that
$$\mathbb{E}\mathcal{L}_2^2(\mathcal{P}_N) = \left( \frac{1}{2^d} - \frac{1}{3^d} \right) \frac{1}{N},$$
and, in particular, for $d=2$ and arbitrary $N$
$$\mathbb{E}\mathcal{L}_2^2(\mathcal{P}_N) = \left( \frac{1}{4} - \frac{1}{9} \right) \frac{1}{N} = \frac{5}{36N}.$$
From this we immediately get:

\begin{corollary} \label{cor:conj} Let $d=2$, and let $N$ be even, then
$$\lim_{N \rightarrow \infty} \frac{\mathbb{E}\mathcal{L}_2^2(\mathcal{P}_N)}{\mathbb{E}\mathcal{L}_2^2(\mathcal{P}_{\Omega_N})} = 2.$$
\end{corollary}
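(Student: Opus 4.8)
The final statement, Corollary~\ref{cor:conj}, follows in one line once Theorem~\ref{thm:main} is available: combining $\mathbb{E}\mathcal{L}_2^2(\mathcal{P}_{\Omega_N}) = \tfrac{5}{72N} + \mathcal{O}(N^{-3/2})$ with the recorded identity $\mathbb{E}\mathcal{L}_2^2(\mathcal{P}_N) = \tfrac{5}{36N}$ gives
$$
\frac{\mathbb{E}\mathcal{L}_2^2(\mathcal{P}_N)}{\mathbb{E}\mathcal{L}_2^2(\mathcal{P}_{\Omega_N})}
= \frac{5/36}{5/72 + \mathcal{O}(N^{-1/2})} \longrightarrow 2 ,
$$
and the intermediate Corollary is just Jensen's inequality $\mathbb{E}\mathcal{L}_2 \le (\mathbb{E}\mathcal{L}_2^2)^{1/2}$. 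So the entire content is Theorem~\ref{thm:main}, which I plan to prove as follows.

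\emph{Reduction to a geometric sum.} The $i$-th stratum of $\Omega_N$ is $\Omega_i = \{(x,y)\in[0,1]^2 : c_{i-1}\le x+y\le c_i\}$, where the equal-area equations force $c_i = \sqrt{2i/N}$ for $i\le N/2$ and $c_i = 2-\sqrt{2(N-i)/N}$ for $i\ge N/2$ (this is \eqref{genSet}; evenness of $N$ makes $c_{N/2}=1$ exact, so no stratum straddles the midline). Because the strata are sampled independently and $|\Omega_i|=1/N$, the stratified box-count is unbiased, and the bias--variance identity of \cite[Proposition~3]{MKFP22} collapses to
$$
\mathbb{E}\mathcal{L}_2^2(\mathcal{P}_{\Omega_N}) \;=\; \frac{1}{4N} \;-\; S_N, \qquad S_N := \sum_{i=1}^N \int_{[0,1]^2} \big|\Omega_i\cap[0,\mathbf{u}[\big|^2\,\mathrm{d}\mathbf{u}.
$$
Thus it suffices to show $S_N = \tfrac{13}{72N} + \mathcal{O}(N^{-3/2})$. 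A cheap Cauchy--Schwarz bound $\sum_i \big|\Omega_i\cap[0,\mathbf u[\big|^2 \ge \big|[0,\mathbf u[\big|^2/N$ already yields $S_N \ge \tfrac{1}{9N}$ and the matching estimate $\mathbb{E}\mathcal{L}_2^2(\mathcal{P}_{\Omega_N}) \le \tfrac{5}{36N} = \mathbb{E}\mathcal{L}_2^2(\mathcal{P}_N)$, which I would keep as a running sanity check on signs and on the final constant.

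\emph{Exact strata integrals and the continuum term.} For fixed $\mathbf u=(u_1,u_2)$ let $F_{\mathbf u}(t) := \big|[0,\mathbf u[\cap\{x+y<t\}\big|$, so $\big|\Omega_i\cap[0,\mathbf u[\big| = F_{\mathbf u}(c_i)-F_{\mathbf u}(c_{i-1})$. The function $F_{\mathbf u}$ is explicitly piecewise quadratic with breakpoints at $t\in\{\min(u_1,u_2),\max(u_1,u_2),u_1+u_2\}$; on each quadratic piece the increments are closed-form, computable via the half-space volume formula of \cite{cho}, and on every stratum whose chord $\{x+y=t\}\cap[0,1]^2$ lies inside $[0,\mathbf u[$ the increment is \emph{exactly} $1/N$ and only reproduces the Cauchy--Schwarz baseline. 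Exploiting that the equal-area condition together with the piecewise-linearity of $F_{\mathbf u}'$ and of $t\mapsto\min(t,2-t)$ makes the midpoint rule exact away from the breakpoints, the squared increments sum (after telescoping the exact pieces) to a Riemann sum along the nodes $c_i$ of $g_{\mathbf u}(t):=F_{\mathbf u}'(t)^2/\min(t,2-t)$, giving
$$
S_N \;=\; \frac{1}{N}\int_{[0,1]^2}\!\int_0^2 \frac{F_{\mathbf u}'(t)^2}{\min(t,2-t)}\,\mathrm{d}t\,\mathrm{d}\mathbf{u} \;+\;(\text{error});
$$
the leading double integral is elementary but laborious (it produces several logarithmic primitives that must cancel) and should evaluate to $\tfrac{13}{72}$.

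\emph{Error control --- the main obstacle.} The delicate part is pinning the error down to $\mathcal{O}(N^{-3/2})$. Two effects drive it. First, the nodes $c_i=\sqrt{2i/N}$ are spaced like $\mathcal{O}(N^{-1/2})$ near $t=0$ and $t=2$, so the Euler--MacLaurin expansion \cite{apostol} of the $i$-sum carries half-integer powers of $N$; this is exactly what turns an otherwise $\mathcal{O}(N^{-2})$ error into $\mathcal{O}(N^{-3/2})$. Second, when $\mathbf u$ is within distance $\mathcal{O}(N^{-1/2})$ of a corner or edge of the square the box $[0,\mathbf u[$ degenerates and the breakpoints of $F_{\mathbf u}$ collide with the clustered small-$t$ strata, so there the Riemann-sum approximation is useless and one instead bounds $\sum_i\big|\Omega_i\cap[0,\mathbf u[\big|^2\le\big|[0,\mathbf u[\big|^2$ directly; since this exceptional set of $\mathbf u$ has measure $\mathcal{O}(N^{-1/2})$, its contribution stays within $\mathcal{O}(N^{-3/2})$. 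Carrying the exact increments through the squaring, splitting the $\mathbf u$-integral along $u_1+u_2=1$, handling the single stratum that straddles $t=1$ where $\min(t,2-t)$ has a kink, and matching every correction term against the target order --- all at once --- is where the real work sits; the two-sided bound from the reduction step guards against arithmetic slips in the constant $\tfrac{5}{72}$.
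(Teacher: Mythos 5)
Your handling of the corollary itself is exactly the paper's: Theorem~\ref{thm:main} gives $\mathbb{E}\mathcal{L}_2^2(\mathcal{P}_{\Omega_N})=\tfrac{5}{72N}+\mathcal{O}(N^{-3/2})$, the value $\mathbb{E}\mathcal{L}_2^2(\mathcal{P}_N)=\tfrac{5}{36N}$ is quoted from \cite{MKFP21}, and the ratio tends to $2$; nothing more is required. Your reformulation $\mathbb{E}\mathcal{L}_2^2(\mathcal{P}_{\Omega_N})=\tfrac{1}{4N}-S_N$ with target $S_N=\tfrac{13}{72N}+\mathcal{O}(N^{-3/2})$ is also the correct bookkeeping (it matches the paper's $\tfrac{1}{4N}-\tfrac{1}{N^2}\sum_i Q_i$ with $\sum_i Q_i=\tfrac{13N}{72}+\mathcal{O}(\sqrt{N})$).

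Where you genuinely diverge is in how you propose to prove Theorem~\ref{thm:main}. The paper computes each $Q_i=\int q_i^2$ in closed form through a four-way case analysis on $i$ (via the Cho--Kim half-space volume formula \cite{cho} and a subdivision of the $(x,y)$-domain), pairs $Q_i$ with $Q_{N+1-i}$, and sums four explicit sequences by Euler--Maclaurin carried to the third-order term because the $N^3$ and $N^2$ contributions cancel. You instead pass to a continuum limit, using exactness of the midpoint rule on the quadratic pieces of $F_{\mathbf u}$ and of the equal-area function to identify $S_N$ with $\tfrac{1}{N}\int_{[0,1]^2}\int_0^2 F_{\mathbf u}'(t)^2/\min(t,2-t)\,\mathrm{d}t\,\mathrm{d}\mathbf{u}$ plus an error. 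This is a more conceptual route: it explains why a single constant emerges and sidesteps the cancellation of leading terms that forces the paper into third-order Euler--Maclaurin. But as written it is not yet a proof of the theorem. The decisive double integral is only asserted to ``evaluate to $\tfrac{13}{72}$''; that multi-case computation (with logarithmic primitives over the regions $u_1+u_2\lessgtr 1$, $\min(u_1,u_2)\lessgtr\max(u_1,u_2)$, and the kink of $\min(t,2-t)$ at $t=1$) \emph{is} the quantitative content of the result and must actually be carried out. Likewise the $\mathcal{O}(N^{-3/2})$ error control --- for the strata containing breakpoints of $F_{\mathbf u}$, the stratum straddling $t=1$, and the near-corner boxes --- is only outlined. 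Completed, your argument would stand as an independent and arguably cleaner proof; as submitted, the corollary is established only modulo Theorem~\ref{thm:main}, whose proposed proof still has these two gaps.
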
 
\begin{remark}
We can compare our result to \cite[Example 1]{MKFP21} in which the discrepancy of equivolume partitions generated from vertical lines was studied. In contrast to our result, the calculation is almost trivial in this case and we get that
 $$
 \mathbb{E} {\cL_2}(\cP_{\mathrm{vert}})^2=\frac{3N+2}{36N^2}\approx \frac35 \mathbb{E} {\cL_2}(\cP_N)^2.
 $$
The interest in our result, besides the modest gain over this simple construction, is that we believe the factor 2 is best-possible for partitions constructed from parallel lines. This belief is both motivated by numerical experiments as well as the before mentioned \cite[Lemma 2]{MKFP21}.
\end{remark}

We close this introduction with various problems for future research:
\begin{enumerate}
\item Prove that our result is best possible among all partitions constructed from parallel lines.
\item Prove the conjecture for arbitrary dimension $d$.
\item Is there a general construction that gives a partition for arbitrary number of points $N$, that improves the asymptotic order of the discrepancy (as jittered sampling does for very particular values of $N$) and not only the constant factor?
\end{enumerate}

\paragraph{Outlook. } In Section \ref{sec:prelim} we define our partitions and introduce important tools for the calculation of the discrepancy of the stratified samples. Section \ref{sec:calculations} provides various important building blocks needed for the derivation of the general formula and it is shown how to approximate the discrepancy numerically. In Section \ref{sec:formula} we derive a general formula for the discrepancy. This formula is then simplified in Section \ref{sec:simple} which allows to prove Theorem \ref{thm:main} and its corollaries.

%%%%%%%%%%%%%%%%%%%%%%%%
\section{Preliminaries} \label{sec:prelim}

\subsection{Partitions} \label{sec:part}
Given the unit square $[0,1]^2$ we would like to arrange $N-1$ parallel lines $H_{i}$ with $i=1, \ldots, N-1$, which are orthogonal to the main diagonal, $D$, of the square, such that we obtain an equivolume partition of the unit square; see Figure \ref{fig:simplePartition}.

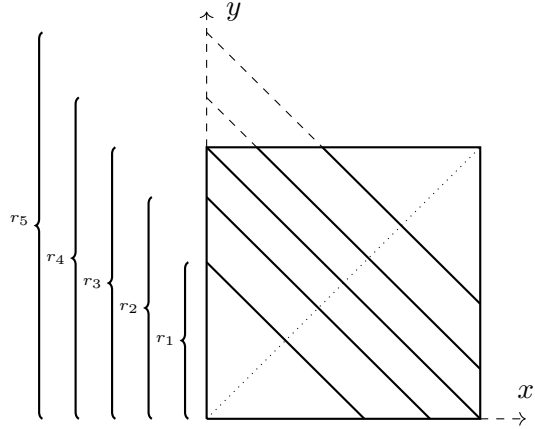
\begin{figure}[h]
\begin{center}
\begin{tikzpicture}[scale=1.2]
		%%%%%%%%%%%%%%%%%%%%%%%
		
		\draw[thick] (0,0) -- (3,0)-- (3,3) -- (0,3)-- (0,0);
		\draw[dotted] (0,0) -- (3,3);
		
		\draw[thick] (0,1.73) -- (1.73,0);
		\draw[thick] (0,2.45) -- (2.45,0);
		\draw[thick] (0,3) -- (3,0);
		\draw[thick] (3-1.73,3) -- (3,3-1.73);
		\draw[thick] (3-2.45,3) -- (3,3-2.45);
		
		\draw[dashed] (0,3.55) --(3-2.45,3);
		\draw[dashed] (0,6-1.73) --(3-1.73,3);
		\draw[dashed,->] (0,3) --(0,4.5);
		\draw[dashed, ->] (3,0) -- (3.5,0);
		
		%\node at (0.865,0.865) {$\bullet$};
		%\node at (0.865,0.5) {\tiny $p_1$};
		
		%\node at (1.225,1.225) {$\bullet$};
		%\node at (1.225,0.925) {\tiny $p_2$};
		
		\node at (-0.45, 0.865) {\tiny $r_1$};
		\node at (-0.85, 1.225) {\tiny $r_2$};
		
		\node at (-1.25, 1.5) {\tiny $r_3$};
		\node at (-1.65, 1.77) {\tiny $r_4$};
		\node at (-2.05, 2.2) {\tiny $r_5$};
		
		\node at (0.3,4.5) {$y$};
		\node at (3.5,0.3) {$x$};
		
		\draw [decorate, decoration = {brace}, thick] (-0.2,0) --  (-0.2,1.73);
		\draw [decorate, decoration = {brace}, thick] (-0.6,0) --  (-0.6,2.45);
		
		\draw [decorate, decoration = {brace}, thick] (-1,0) --  (-1,3);
		\draw [decorate, decoration = {brace}, thick] (-1.4,0) --  (-1.4,3.55);
		\draw [decorate, decoration = {brace}, thick] (-1.8,0) --  (-1.8,6-1.73);
\end{tikzpicture}
\end{center}
\caption{Partition of the unit cube into $N=6$ equivolume slices that are orthogonal to the diagonal.} \label{fig:simplePartition}
\end{figure}

We denote the intersection $H_i \cap D$ of a line with the diagonal with $p_i$.
It is straightforward to calculate all points $p_i$ for arbitrary $N$. In fact, note that $H_i$ splits the unit square into two sets of volume $i/N$ and $1-i/N$. If $i\leq N/2$, we just need to look at the isosceles right triangle that $H_i$ forms with $(0,0)$. We know that this triangle has volume $i/N$. Denote the length of the two equal sides with $r_i>0$, then $r_i^2/2=i/N$. %and $p_i = \sqrt{2} r_i$. 
Therefore, we get that
\begin{equation} \label{twoDim}
r_i= \sqrt{\frac{2i}{N}}, %\quad \text{ and } \quad p_i = \sqrt{\frac{i}{2 N}},
\end{equation}
for all $i\leq N/2$. By symmetry, we also get the points $r_i$ with $i>N/2$.

To state this problem formally, we introduce further notation. Let $H_r^+$ be the positive half space defined as the set of all $\bx \in \RR^2$ satisfying
\begin{equation*} x_1 + x_2 - r \geq 0; \end{equation*}
accordingly let  $H_r^-$ be the corresponding negative half space and let $H_r$ be the line of all points $\bx \in \RR^2$ satisying $x_1 + x_2  + r = 0$.
For a given $N$, we would like to find the points $0<r_1 < \ldots < r_{N-1} < 2$ such that the corresponding lines $H_{r_1}, \ldots, H_{r_{N-1}}$ define a partition of $[0,1]^2$ into $N$ equivolume sets. We call the set 
$$\cS(N,2):=\{r_i : i=1, \ldots, N-1\}$$ 
the \emph{generating set} of the partition. Using \eqref{twoDim} we see that
\begin{align} \label{genSet}
\cS(N,2) = \left\{ \sqrt{\frac{2i}{N}}: 1\leq i \leq N/2 \right\} \cup \left\{2 -\sqrt{\frac{2(N-i)}{N}}: N/2 +1 \leq i < N \right\}.
\end{align}
Note that for even $N$ the value $1$ is in the set, while it is not for odd $N$.
In particular, we define each set in the partition $\mathbf{\Omega}_N=\{ \Omega_1, \ldots, \Omega_N\}$ as
\begin{equation*} \Omega_i = H_{r_{i-1}}^+ \cap H_{r_i}^- \cap [0,1]^2 \end{equation*}
for $i=1, \ldots, N$ with $r_{0}=0$ and $r_N=2$.

\subsection{Calculation of discrepancy}
Our work utilises a proposition from \cite{MKFP22} (see also \cite{MKFP21}) regarding the expected discrepancy of stratified samples obtained from an equivolume partition of the cube. 

\begin{proposition} [Proposition 3, \cite{MKFP22}]
	If $\boldsymbol{\Omega} = \{\Omega_1, \ldots, \Omega_N\}$ is an equivolume partition of a compact convex set $K \subset \mathbb{R}^d$ with $|K| > 0$, then 
	\begin{equation}\label{eq:originalequation}
		\mathbb{E}\mathcal{L}_2^2(\mathcal{P}_{\boldsymbol{\Omega}}) = \frac{1}{N^2 |K|} \sum_{i=1}^N \int_{K} q_i (\mathbf{x}) \left( 1 - q_i (\mathbf{x}) \right) \dd \mathbf{x}
	\end{equation} 
	with $q_i (\mathbf{x}) = \frac{\left| \Omega_i \cap [0, \mathbf{x}) \right|}{\left| \Omega_i \right|}.$
\end{proposition}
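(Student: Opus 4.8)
The plan is to read the right-hand side as a bias--variance decomposition of the local discrepancy function at a fixed point, and then to integrate in $\bx$. Write $\mathcal{P}_{\boldsymbol{\Omega}}=\{X_1,\dots,X_N\}$, where $X_i$ is uniform on $\Omega_i$ and the $X_i$ are independent, and put $A_\bx:=[0,\bx)$. For fixed $\bx$ I would express the counting function as a sum of independent indicators,
$$\#\big(\mathcal{P}_{\boldsymbol{\Omega}}\cap A_\bx\big)=\sum_{i=1}^N Z_i(\bx),\qquad Z_i(\bx):=\mathbbm{1}_{\{X_i\in A_\bx\}},$$
where $Z_i(\bx)$ is a Bernoulli variable with $\mathbb{P}(X_i\in A_\bx)=|\Omega_i\cap A_\bx|/|\Omega_i|=q_i(\bx)$, and the $Z_i(\bx)$ are independent across $i$ because the $X_i$ are.

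The one substantive point is unbiasedness. Since the $\Omega_i$ form a partition of $K$ of common volume $|K|/N$ and $A_\bx\subseteq K$, we have $\sum_{i=1}^N|\Omega_i\cap A_\bx|=|A_\bx|$, hence $\frac1N\sum_{i=1}^N q_i(\bx)=\frac1N\sum_i\frac{|\Omega_i\cap A_\bx|}{|K|/N}=\frac{|A_\bx|}{|K|}$; that is, the empirical fraction $\frac1N\#(\mathcal{P}_{\boldsymbol{\Omega}}\cap A_\bx)$ is an unbiased estimator of the target volume $|A_\bx|/|K|$, so the discrepancy function is centred at every $\bx$. Therefore its second moment equals its variance, and independence gives
$$\mathbb{E}\left[\left(\tfrac1N\#(\mathcal{P}_{\boldsymbol{\Omega}}\cap A_\bx)-\tfrac{|A_\bx|}{|K|}\right)^2\right]=\operatorname{Var}\left(\tfrac1N\sum_{i=1}^N Z_i(\bx)\right)=\frac1{N^2}\sum_{i=1}^N\operatorname{Var}\big(Z_i(\bx)\big)=\frac1{N^2}\sum_{i=1}^N q_i(\bx)\big(1-q_i(\bx)\big),$$
using $\operatorname{Var}(Z_i(\bx))=q_i(\bx)(1-q_i(\bx))$ for a Bernoulli variable. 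Integrating this identity over $\bx$ with the $\mathcal{L}_2$-normalisation on $K$ that produces the factor $1/|K|$, and interchanging $\mathbb{E}$ with $\int$ — justified by Tonelli's theorem, since the integrand is nonnegative and jointly measurable in the sample point and in $\bx$ — yields $\mathbb{E}\mathcal{L}_2^2(\mathcal{P}_{\boldsymbol{\Omega}})=\frac{1}{N^2|K|}\sum_{i=1}^N\int_K q_i(\bx)(1-q_i(\bx))\,\dd\bx$. For $K=[0,1]^d$, the case used in this paper, one has $|K|=1$ and this is the usual $\mathcal{L}_2$-discrepancy verbatim.

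I do not expect a genuine obstacle: this is the classical stratified-sampling variance identity (zero bias $\Rightarrow$ mean square error equals variance, and the variance of a sum of independent terms is the sum of their variances). The only two places that want a careful line are (i) the unbiasedness step, which is precisely where the equivolume hypothesis enters — for a partition with unequal cell volumes the bias $\frac1N\sum_i q_i(\bx)-|A_\bx|/|K|$ need not vanish, so the second moment would acquire an extra bias-squared term — and (ii) checking the hypotheses of Tonelli's theorem, which is immediate because $0\le(\cdot)^2\le 1$ pointwise. I would present the whole argument in roughly the two displays above.
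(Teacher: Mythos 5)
The paper states this proposition as an imported result from \cite{MKFP22} and gives no proof of its own, so there is nothing internal to compare against; your argument is correct and is essentially the standard proof from the cited source: write the counting function as a sum of independent Bernoulli indicators with parameters $q_i(\bx)$, observe that the equivolume hypothesis makes the estimator unbiased so the pointwise second moment of the discrepancy function equals the variance $\frac{1}{N^2}\sum_i q_i(\bx)(1-q_i(\bx))$, and integrate via Tonelli. Your identification of unbiasedness as the one place where equivolumity is genuinely used (and where the statement would fail otherwise) is exactly the right emphasis; the only loose end is the provenance of the $1/|K|$ factor for general $K$, which comes from the volume-normalised definition of the discrepancy function on $K$ and is vacuous in the case $K=[0,1]^d$ actually used in the paper.
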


In our particular problem, we have that $\left| \Omega_i \right|= 1/N$ and $|K|=1$. Hence, we need to find
\begin{equation}\label{equ:q} q_i (\mathbf{x}) = N \cdot \left| \Omega_i \cap [0, \mathbf{x}) \right| \end{equation}
for every rectangle $[0, \mathbf{x})$ with $\mathbf{x} \in [0,1]^2$ to calculate the discrepancy.

%%%%%%%%%%%%%%%%%%%%%%%%%%
\section{Towards a general formula} \label{sec:calculations}

The aim of this section is to derive a general formula for $q_i (\mathbf{x})$ as defined in \eqref{equ:q}. In a first step we explore how to calculate the area of intersection of a rectangle and a positive halfspace. In a second step we use this insight to state a formula for arbitrary $q_i (\mathbf{x})$. This is already enough to obtain a numerical approximation of the expected discrepancy of our stratified samples as shown in the final part of this section.

\subsection{Calculating areas of intersections}
Our first goal is to derive a general formula for the intersection of an arbitrary rectangle with a positive half-space; see Figure \ref{fig:intersection}.
This formula is our main tool to calculate the $q_i(\mathbf{x})$ for arbitrary $i$ and $\mathbf{x}$ and will enable us to calculate the expected discrepancy of our partitions.

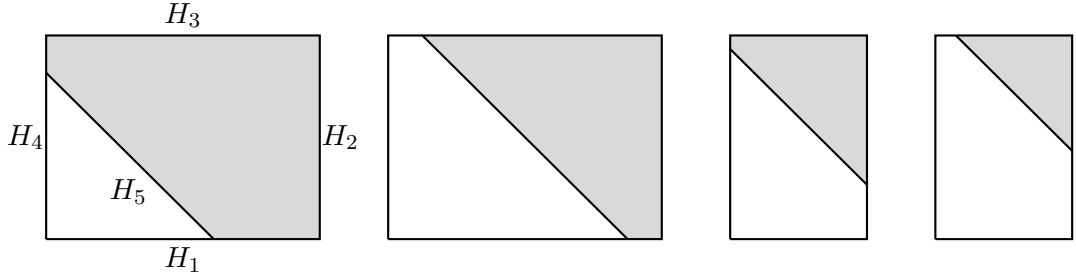
\begin{figure}[h]
\begin{tikzpicture}[scale=0.9]
		%%%%%%%%%%%%%%%%%%%%%%%
		
		\fill[gray!30] (0,2.45) -- (2.45,0) -- (4,0)-- (4,3) -- (0,3)--(0,2.45);
		\fill[gray!30] (5.5,3) -- (8.5,0) -- (9,0)-- (9,3) -- (5.5,3);
		
		\fill[gray!30] (10,3) -- (10,2.8) -- (12,0.8)-- (12,3) -- (10,3);
		\fill[gray!30] (15-1.7,3) -- (15,3-1.7) -- (15,0)-- (15,3) -- (13,3)--(15-1.7,3);
		
		\draw[thick] (0,0) -- (4,0)-- (4,3) -- (0,3)-- (0,0);
		\draw[thick] (0,2.45) -- (2.45,0);
		
		\draw[thick] (5,0) -- (9,0)-- (9,3) -- (5,3)-- (5,0);
		\draw[thick] (5.5,3) -- (8.5,0);
		
		\draw[thick] (10,0) -- (12,0)-- (12,3) -- (10,3)-- (10,0);
		\draw[thick] (10,2.8) -- (12,0.8);
		
		\draw[thick] (13,0) -- (15,0)-- (15,3) -- (13,3)-- (13,0);
		\draw[thick] (15-1.7,3) -- (15,3-1.7);
		
		\node at (-0.3,1.5) {$H_4$};
		\node at (2,-0.3) {$H_1$};
		\node at (4.3,1.5) {$H_2$};
		\node at (2,3.3) {$H_3$};
		\node at (1.2, 0.7) {$H_5$};
\end{tikzpicture}
\caption{Intersections of a rectangle with a positive halfspace.} \label{fig:intersection}
\end{figure}

The lemma below follows directly from the general formula for the volume of the intersection of a $d$-dimensional polytope and a positive hyperplane as given in \cite[Theorem 7]{cho}. Let $P=[0,x] \times [0,y] = \bigcap_{i=1}^4 H_i^+$ be a rectangle in $\RR^2$ defined as the intersection of the four lines
\begin{align*}
H_1^+ &:= \{ (u,v): g_1(u,v) \geq 0\} \quad \text{ for } \quad g_1(u,v)=v \\
H_2^+ &:= \{ (u,v): g_2(u,v) \geq 0\} \quad \text{ for } \quad g_2(u,v)=-u+x \\
H_3^+ &:= \{ (u,v): g_3(u,v) \geq 0\} \quad \text{ for } \quad g_3(u,v)=-v + y \\
H_4^+ &:= \{ (u,v): g_4(u,v) \geq 0\} \quad \text{ for } \quad g_4(u,v)=u.
\end{align*}
Moreover, for given $r>0$ let
$$ H_5^+ := \{ (u,v): g_5(u,v) \geq 0\} \quad \text{ for } \quad g_5(u,v)=u+v-r.$$
We also define a matrix $A$, such that row $i$ contains the coefficients of $u$ and $v$ of $g_i$, i.e.,
$$A= \left( \begin{matrix}
0 & 1\\
-1 & 0 \\
0 & -1 \\
1 &0 \\
1 & 1 
\end{matrix} \right).
$$
We denote $2\times 2$ submatrices of $A$ as follows
$$(A)_{\{k,l \}}^{\{i,j \}}= \left( \begin{matrix}
a_{i,k} & a_{i,l}\\
a_{j,k} & a_{j,l}
\end{matrix} \right).
$$
The determinant is then given by 
$$A_{\{k,l \}}^{\{i,j \}}= \left| \begin{matrix}
a_{i,k} & a_{i,l}\\
a_{j,k} & a_{j,l}
\end{matrix} \right| = a_{i,k}a_{j,l} - a_{j,k}a_{i,l}.
$$
Now let $I \subset \{1,2,3,4\}$ and define
$$H_I := \bigcap_{j \in I} H_j \cap \bigcap_{i \in \{1,2,3,4 \}\setminus I} H_i^+ \cap (H_5^+ \setminus H_5); $$
see Figure \ref{fig:HI} for an illustration.

\begin{figure}[h]
\begin{center}
\begin{tikzpicture}[scale=0.9]
		%%%%%%%%%%%%%%%%%%%%%%%
		\fill[gray!30] (5.5,3) -- (8.5,0) -- (9,0)-- (9,3) -- (5.5,3);
		
		\draw[thick, dashed, {Circle[open]}-{Circle[open]}] (5,-0.1)--(5,3.1);
		
		\draw[thick, dashed] (5,0) -- (8.5,0)-- (5.5,3) -- (5,3);
		\draw[thick, gray] (5.5,3) -- (9,3)--(9,0);
		\draw[thick, gray, -{Circle[open]}](8.5,0)--(9.1,0);
		
		\node at (9,3) {$\bullet$};
		
		\node at (4.7,1.5) {$H_4$};
		\node at (7,-0.3) {$H_1$};
		\node at (9.3,1.5) {$H_2$};
		\node at (7,3.3) {$H_3$};
		\node at (6.5, 1.2) {$H_5$};
		
		\node at (4.7, -0.3) {$A$};
		\node at (9.3, -0.3) {$B$};
		\node at (9.3,3.3) {$C$};
		\node at (4.7, 3.3) {$D$};
\end{tikzpicture}
\caption{The intersection of the five positive half-spaces generated by the lines $H_i$ with $1\leq i \leq 5$ without $H_5$ is shaded in gray. The corresponding set $H_I$ for $I=\{2,3\}$ consists of the vertex $C$.} \label{fig:HI}
\end{center}
\end{figure}
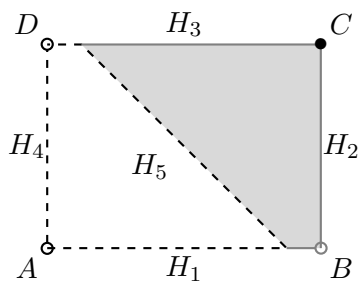

\begin{lemma} \label{lem:vol}
Let $P= \bigcap_{i=1}^4 H_i^+$, then we have that
$$\vol(P \cap H_5^+) = \sum_{I\subset \{1,2,3,4 \}}^{|I|=2} \sum_{\mathbf{v} \in H_I} \frac{(-1)^3 \left( g_5(\mathbf{v} ) A_{\{1,2\}}^I \right)^2 }{2 \left| A_{\{1,2\}}^I \right | \prod_{t \in I} A_{\{1,2\}}^{I \setminus t\cup \{5\} }}$$
\end{lemma}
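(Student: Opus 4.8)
The plan is to instantiate Cho's general volume formula \cite[Theorem 7]{cho} for the intersection of a $d$-polytope with a positive halfspace in the special case $d=2$, where the polytope $P$ is the rectangle $[0,x]\times[0,y]$ cut out by the four halfspaces $H_1^+,\dots,H_4^+$ and the extra cutting halfspace is $H_5^+$. Cho's theorem expresses $\vol(P\cap H_5^+)$ as a sum over the vertices of the arrangement that lie in the closed region $\bigcap_{i=1}^4 H_i^+\cap H_5^+$ but strictly on the positive side of $H_5$; each such vertex is the transversal intersection of two of the bounding lines $H_j$, $j\in I$ with $|I|=d=2$, and contributes a term built from $g_5$ evaluated at the vertex together with a ratio of $2\times 2$ determinants of the coefficient matrix $A$. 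So the first step is simply to write down Cho's formula, match notation (his normal vectors and offsets against our $g_i$, our $A$, our sub-determinant notation $A_{\{k,l\}}^{\{i,j\}}$), and observe that for $d=2$ the index set of ``active'' constraints at a contributing vertex has size two, which is exactly the outer sum $\sum_{I\subset\{1,2,3,4\},\,|I|=2}$; the inner sum $\sum_{\mathbf v\in H_I}$ then ranges over the (at most one) vertex of the arrangement of that combinatorial type that survives inside the gray region of Figure~\ref{fig:HI}.

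Concretely, I would carry out the following steps. (1) State Cho's formula verbatim for general $d$ and the chosen halfspace as the cutting one. (2) Specialise $d=2$: the generic term in Cho's sum indexed by a size-$d$ subset $I$ of the non-cutting constraints and a vertex $\mathbf v=H_I\cap(\text{arrangement})$ becomes
$$\frac{(-1)^{d}\,\bigl(g_5(\mathbf v)\bigr)^{d}\,\det(\text{active rows restricted to first $d$ columns})^{?}}{d!\,\bigl|\det(\cdots)\bigr|\,\prod_{t\in I}(\cdots)},$$
and one checks that with $d=2$ the sign $(-1)^{d}$ combined with the normalisation $d!=2$ and Cho's determinantal weights reorganises exactly into the displayed expression
$$\frac{(-1)^3\,\bigl(g_5(\mathbf v)A_{\{1,2\}}^I\bigr)^2}{2\,\bigl|A_{\{1,2\}}^I\bigr|\,\prod_{t\in I}A_{\{1,2\}}^{I\setminus t\cup\{5\}}}.$$
Here the columns $\{1,2\}$ appear because in $\RR^2$ there are only two coordinate columns, so every relevant sub-determinant of $A$ uses both of them. (3) Verify that $H_I$ as defined in the excerpt — $\bigcap_{j\in I}H_j\cap\bigcap_{i\notin I}H_i^+\cap(H_5^+\setminus H_5)$ — is precisely the set of vertices that Cho's theorem sums over (his ``feasible vertices on the strictly positive side of the cut''), using that a vertex of the line arrangement in the plane is the intersection of exactly two of the lines $H_1,\dots,H_4$ generically, and that the condition of lying in $H_5^+\setminus H_5$ is what distinguishes a genuine contribution from a spurious one. (4) Sanity-check the formula on a trivial case, e.g.\ $r$ so small that $H_5^+\supset P$, where $H_I=\varnothing$ for every $I$ and both sides vanish, and on the case where $H_5$ clips a single corner (Figure~\ref{fig:HI} with $I=\{2,3\}$, vertex $C$), where the right-hand side should reduce to $\tfrac12 g_5(C)^2$ times a determinant factor equal to $1$, matching the elementary area of the clipped right triangle.

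The only real content beyond bookkeeping is the sign and determinant reconciliation in step (2): Cho's theorem is stated with a particular orientation/normalisation convention for the halfspace normals, and one must check that our choice of $g_1,\dots,g_5$ (with $g_2=-u+x$, $g_3=-v+y$ carrying the ``wrong-looking'' signs so that $P=\bigcap H_i^+$ genuinely gives the rectangle with the correct orientation) feeds through Cho's formula to produce the stated $(-1)^3$ and the stated placement of $A_{\{1,2\}}^I$ in numerator versus $\bigl|A_{\{1,2\}}^I\bigr|$ in the denominator. I expect this to be the main obstacle — not because it is deep, but because getting every sign and every determinant in the right place requires care; the cleanest route is probably to track one explicit clipped-corner example all the way through Cho's general expression and confirm it lands on the elementary answer, then invoke the general theorem for the rest. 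Everything else (the reduction of Cho's size-$d$ subsets to pairs, the identification of $H_I$ with the contributing vertex set, the plane-only simplification that forces the column index set to be $\{1,2\}$) is immediate once $d=2$ is substituted. Therefore the proof is essentially: quote \cite[Theorem 7]{cho}, set $d=2$, and simplify, with the sign/determinant check as the one place demanding attention.
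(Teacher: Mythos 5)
Your proposal matches the paper's approach exactly: the paper's entire proof of Lemma~\ref{lem:vol} is the single sentence ``This follows directly from \cite[Theorem 3.2]{cho}'', i.e.\ it too simply specialises Cho's general clipped-polytope volume formula to $d=2$ and leaves the notational matching implicit. Your more detailed plan for reconciling signs, determinants, and the vertex sets $H_I$ is a faithful (indeed more careful) elaboration of the same argument, so there is nothing to fix.
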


\begin{proof}
This follows directly from \cite[Theorem 3.2]{cho}.
\end{proof}

Importantly, the second summation consists of either an empty summand or only one summand. In fact, all that matters is which of the four vertices of the rectangle $P=[0,x] \times [0,y]$ are contained in the intersection; see Figure \ref{fig:intersection} and Figure \ref{fig:HI}. We denote the vertices with $A, B, C, D$ such that 
\begin{align*}
A=H_4 \cap H_1=(0,0), \quad &B=H_1 \cap H_2=(x,0), \\
C=H_2 \cap H_3=(x,y), \quad &D=H_3 \cap H_4=(0,y).
\end{align*}
Hence, we can distinguish four different cases in which $S:=\bigcap_{i \in \{1,2,3,4 \}\setminus I} H_i^+ \cap (H_5^+ \setminus H_5)$:
\begin{itemize}
\item Case 1: If exactly $B, C, D \in S$ then $$\vol(P \cap H_5^+)=\frac{g_5(x,y)^2 -g_5(x,0)^2 - g_5(0,y)^2 }{2}.$$
\item Case 2: If exactly $B, C \in S$ then $$\vol(P \cap H_5^+)=\frac{g_5(x,y)^2 -g_5(x,0)^2}{2}.$$
\item Case 3: If exactly $C, D \in S$ then $$\vol(P \cap H_5^+)=\frac{g_5(x,y)^2 - g_5(0,y)^2 }{2}.$$
\item Case 4: If exactly $C \in S$ then $$\vol(P \cap H_5^+)=\frac{g_5(x,y)^2}{2}.$$
\end{itemize}
Since $A$ is $(0,0)$ it is never contained in $H_5^+\setminus H_5$. Therefore, these are indeed all cases to consider.

\subsection{Formula for $q_i$} \label{sec:qi}
The next aim is to write down general formulas for the $q_i$ and fixed $N$. Recall that
$$q_i (\mathbf{x}) = N \cdot \left| \Omega_i \cap [0, \mathbf{x}) \right|$$ 
for every rectangle $[0, \mathbf{x})$ with $\mathbf{x} \in [0,1]^2$.
Note that
$$ \Omega_i \cap [0, \mathbf{x}) = H_{5,i-1}^+ \cap H_{5,i}^- \cap \bigcap_{i=1,2,3,4} H_i^+,$$
in which $H_{5,i}$ refers to the $i$-th line defined via $g_{5,i}(x,y)=x+y-r_i$.
Lemma \ref{lem:vol} will be our main tool and we define
$$V_i=\vol \left( \bigcap_{i=1,2,3,4} H_i^+ \cap H_{5,i}^+ \right)=\vol([0, \mathbf{x}] \cap H_{5,i}^+).$$
With this it is easy to see that we have
\begin{itemize}
\item $q_1(x,y)= \begin{cases} N xy, &\text{if } (x,y) \in \Omega_1 \\ N (xy - V_1), & \text{if } (x,y) \in \bigcup_{j=2}^{N}\Omega_j \end{cases}$
\item $q_N(x,y)= \begin{cases} 0, &\text{if } (x,y) \in \bigcup_{j=1}^{N-1}\Omega_j \\ N V_{N-1}, & \text{if }  (x,y) \in \Omega_4 \end{cases}$
\end{itemize}
And for $2\leq i \leq N-1$ we have
\begin{itemize}
\item $q_i(x,y)= \begin{cases} 0 &\text{if } (x,y) \in \bigcup_{j=1}^{i-1}\Omega_j \\ N V_{i-1}, &\text{if } (x,y) \in \Omega_i \\ N (V_{i-1}-V_i), & \text{if } (x,y) \in \bigcup_{j=i+1}^{N}\Omega_j \end{cases}$
\end{itemize}

\subsection{Example of how to calculate $q_i$ in practice} \label{sec:example}
To illustrate this definition we look at the particular case $N=4$ and $(x,y)=(0.4, 0.8)$ as shown in Figure \ref{fig:qi}. We have that $(x,y) \in \Omega_3$ with $xy=0.32$. To calculate the values of the $q_i$, $1\leq i \leq 4$, we first need to calculate $V_1$ and $V_2$. We have that
\begin{align*}
g_{5,1}(x,y) &= x + y - r_1=x + y - \frac{1}{\sqrt{2}}, \\
g_{5,2}(x,y) &= x + y - r_2=x + y - 1,
\end{align*}
and, therefore, by Lemma \ref{lem:vol}:
\begin{align*}
V_1 &=\vol([0, \mathbf{x}] \cap H_{5,1}^+) \overset{Case \ 3} = \frac{g_{5,1}(0.4,0.8)^2 - g_{5,1}(0,0.8)^2 }{2} = \frac{0.2429 - 0.0086}{2} = 0.11715, \\
V_2 &=\vol([0, \mathbf{x}] \cap H_{5,2}^+) \overset{Case \ 4} =  \frac{g_{5,2}(0.4,0.8)^2}{2} = \frac{0.04}{2}=0.02.
\end{align*}
From this we can calculate
\begin{align*}
q_1 (0.4,0.8)&= 4 (0.32 - 0.11715) \approx 0.8114 \\ 
q_2 (0.4,0.8) &= 4 (0.11715 - 0.02) \approx 0.3886 \\
q_3  (0.4,0.8)& = 4\cdot 0.02 = 0.08 \\
q_4  (0.4,0.8)&= 0
\end{align*}

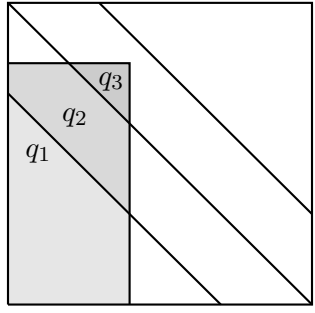
\begin{figure}[h]
\begin{center}
\begin{tikzpicture}[scale=0.8]
		%%%%%%%%%%%%%%%%%%%%%%%
		
		\fill[gray!20] (0,0) -- (2,0) -- (2,1.5)-- (0,3.5) -- (0,0);
		\fill[gray!30] (2,1.5) -- (2,3) -- (1,4)-- (0,4) -- (0,3.5)--(2,1.5);
		\fill[gray!40] (2,4) -- (1,4) -- (2,3)-- (2,4);
		
		\draw[thick] (0,0) -- (5,0)--(5,5) -- (0,5) -- (0,0);
		\draw[thick] (5,0) -- (0,5);
		\draw[thick] (0,3.5) -- (3.5,0);
		\draw[thick] (1.5,5) -- (5,1.5);
		
		\draw[thick] (0,4) -- (2,4) -- (2,0);
		
		\node at (0.5,2.5) {$q_1$};
		\node at (1.1,3.1) {$q_2$};
		\node at (1.7,3.7) {$q_3$};
		
	\end{tikzpicture}
\caption{Illustration of $q_i$ for $N=4$.} \label{fig:qi}
\end{center}
\end{figure}

\subsection{Numerical approximation of discrepancy}
The calculations of the previous sections enable us to write down an approximation for $\mathbb{E}\mathcal{L}_2^2(\mathcal{P}_{\boldsymbol{\Omega}})$. This formula can be easily implemented and evaluated for arbitrary, but even (!) $N$. In particular, we have that
\begin{equation} \label{form:approx}
\mathbb{E}\mathcal{L}_2^2(\mathcal{P}_{\boldsymbol{\Omega}}) \approx \frac{1}{N^2} \sum_{i=1}^N \left( \frac{1}{M} \sum_{h=1}^M q_i\left( \x_h \right) \cdot \left( 1 - q_i\left( \x_h \right) \right) \right),
\end{equation}
in which the double integral from \eqref{eq:originalequation} is approximated using a finite set $(\x_h)_{h=1}^M \subset [0,1]^2$ of integration nodes.
The values in Table \ref{tab:numValues} were calculated using a Halton sequence in bases 2 and 3 for $M=40000$.
We use the numerical results to provide further evidence towards Corollary \ref{cor:conj} and display our findings in Figure \ref{fig:conj}.

\begin{figure}
\centering
\begin{minipage}[t]{.45\textwidth}
\centering
\vspace{0pt}
\includegraphics[width=\textwidth]{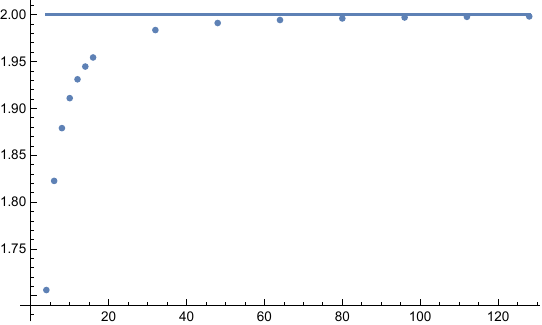}
\caption{Approximation of $\frac{\mathbb{E}\mathcal{L}_2^2(\mathcal{P}_N)}{\mathbb{E}\mathcal{L}_2^2(\mathcal{P}_{\Omega_N})}$ using $\widehat{\mathbb{E}}\mathcal{L}_2^2(\mathcal{P}_{\Omega_N})$ for different values of $N$. } \label{fig:conj}
\end{minipage}\hfill
\begin{minipage}[t]{.45\textwidth}
\centering
\vspace{0pt}
 \begin{tabular}{|c|c||c|c|}
    \hline
        $N$ & $\widehat{\mathbb{E}}\mathcal{L}_2^2(\mathcal{P}_{\boldsymbol{\Omega}})$ & $N$ & $\widehat{\mathbb{E}}\mathcal{L}_2^2(\mathcal{P}_{\boldsymbol{\Omega}})$ \\
        \hline
        4& 0.0203506& 32& 0.002188\\
         \hline
         6& 0.0127002&48&0.001453\\
         \hline
         8& 0.009239&64&0.001088\\
         \hline
        10 &0.007267 &80&0.000869\\
         \hline
        12 &0.005993&96&0.000724 \\
         \hline
       14  &0.005101 &112&0.000620\\
         \hline
      16   &0.004441&128&0.000543\\
         \hline
    \end{tabular}
    \captionof{table}{Numerical approximation of the expected discrepancy for different values of $N$.}
    \label{tab:numValues}
\end{minipage}
\end{figure}

%%%%%%%%%%%%%%%%%%%%%%%%%%
\section{A general formula for the discrepancy} \label{sec:formula}
Based on the calculations from the previous section it is possible to derive a formula for $\mathbb{E}\mathcal{L}_2^2(\mathcal{P}_{\Omega_N})$. First, we can rewrite the expected discrepancy as follows:
\begin{align*}
\mathbb{E}\mathcal{L}_2^2(\mathcal{P}_{\boldsymbol{\Omega}}) &= \frac{1}{N^2} \sum_{i=1}^N \int_{[0,1]^2} q_i (\mathbf{x}) \left( 1 - q_i (\mathbf{x}) \right) \dd \mathbf{x} \\
&=\frac{1}{N^2}  \left(  \int_{[0,1]^2} \sum_{i=1}^N q_i (\mathbf{x}) d\mathbf{x} -   \sum_{i=1}^N  \int_{[0,1]^2} q_i^2 (\mathbf{x}) \dd \mathbf{x} \right)
	\end{align*} 

Looking at the definition of the $q_i$ it is easy to see that the sum in the first integral is a telescope sum such that the first integral reduces to
$$\int_{[0,1]^2} \sum_{i=1}^N q_i (\mathbf{x}) \dd \mathbf{x} = \int_{[0,1]^2} N xy \dd \mathbf{x} = N/4. $$

What remains are the integrals over the squares of the $q_i$.
We define 
\begin{equation} \label{def:Q} Q_i:=\int_0^1 \int_0^1 q_i^2(x,y) \dd y \dd x \end{equation}
and we recall that
$$g_{5,i}(x,y):=x+y-r_i.$$
The main idea of the subsequent calculations is to calculate $Q_i$ for every index $1\leq i \leq N$ based on the definition of $q_i$ stated in Section \ref{sec:qi}. The main challenge is that the definition of each $q_i$ in turn relies on the definition of the volumes $V_i$, i.e., of the intersections of the test box $[0,\bx]$ and the half space $H_{5,i}^+$. For each $i$, we subdivide the integration domain into subdomains such that $q_i(x,y)$ has the same functional form for all points $(x,y)$ in the same domain and, hence, it is possible to explicitly evaluate the (sub)integral.
The complexity of the overall calculation mainly comes from the different cases to consider while the integrals itself are straightforward to evaluate as the integrands are just polynomials.

\subsection{The case $i=1$}
First, we look at
$$Q_1=\int_0^1 \int_0^1 q_1^2(x,y) \dd y \dd x.$$
We have to split this integral into five subintegrals according to the definition of $V_1$; see Figure \ref{fig:q1} (Left) for an illustration. In particular, remember that the formula for $V_1$ depends on which vertices of the rectangle spanned by $x$ and $y$ lie in the intersection of the rectangle with $H_5^+$. Correspondingly we get
\begin{align*}
\int_0^1 \int_0^1 q_1^2(x,y) \dd y \dd x &= \int_0^{r_1} \int_0^1 q_1^2(x,y) \dd y \dd x + \int_{r_1}^1 \int_0^1 q_1^2(x,y) \dd y \dd x.
\end{align*}
Using the definition of $q_1$ we obtain
\begin{align*}
\int_0^{r_1} \int_0^1 q_1^2(x,y) \dd y \dd x = \int_0^{r_1} &\left( \int_0^{-x+r_1} N^2 x^2 y^2 \dd y + \int_{-x+r_1}^{r_1} \left( Nxy - \frac{N g_{5,1}(x,y)^2}{2} \right)^2 \dd y \right. \\
&\left. +\int_{r_1}^1 \left(N x y - \frac{N g_{5,1}(x,y)^2}{2} + \frac{N g_{5,1}(0,y)^2}{2} \right)^2 \dd y \right) \dd x,
\end{align*}
as well as
\begin{align*}
\int_{r_1}^1 \int_0^1 q_1^2(x,y) \dd y \dd x = \int_{r_1}^1 &\left( \int_{0}^{r_1} \left(N x y - \frac{N g_{5,1}(x,y)^2}{2} + \frac{N g_{5,1}(x,0)^2}{2} \right)^2 \dd y \right. \\
&\left. \int_{r_1}^1 \left(N x y - \frac{N g_{5,1}(x,y)^2}{2} + \frac{N g_{5,1}(x,0)^2}{2} + \frac{N g_{5,1}(0,y)^2}{2}\right)^2 \dd y\right) \dd x
\end{align*}
This can be simplified to
$$\int_0^1 \int_0^1 q_1^2(x,y) \dd y \dd x = 1-\frac{14 \sqrt{2}}{15 \sqrt{N}} + \frac{2}{5N}.$$

\begin{figure}[h]
\begin{center}
\begin{tikzpicture}[scale=0.8]
		\draw[thick] (0,0) -- (5,0)--(5,5) -- (0,5) -- (0,0);
		\draw[thick] (0,3.5) -- (3.5,0);
		\draw[thick, dashed] (0,3.5) -- (3.5,3.5) -- (3.5,0);
		\draw[thick, dashed] (3.5,5) -- (3.5,3.5) -- (5,3.5);
		\node at (-0.3,3.5) {\footnotesize{$r_1$}};
		\draw[dotted, thick,->] (5,0)--(6,0);
		\draw[dotted,thick,->] (0,5)--(0,6);
		\node at (6,0.3) {$x$};
		\node at (0.3,6) {$y$};
	\end{tikzpicture} \hspace{2cm}
	\begin{tikzpicture}[scale=0.8]
		%%%%%%%%%%%%%%%%%%%%%%%
		
		\fill[gray!20] (3,5) -- (5,3) -- (5,5)--(3,5);
		\draw[thick] (0,0) -- (5,0)--(5,5) -- (0,5) -- (0,0);
		\draw[thick] (3,5) -- (5,3);
		\node at (3,5.3) {\footnotesize{$r_{N-1}-1$}};
		\draw[dotted, thick,->] (5,0)--(6,0);
		\draw[dotted,thick,->] (0,5)--(0,6);
		\node at (6,0.3) {$x$};
		\node at (0.3,6) {$y$};
	\end{tikzpicture}
\caption{Left: The case $i=1$ with its 5 subregions. Right: The case $i=N$. In this case only points $(x,y)$ in the upper right triangle have a non-zero contribution to the integral.} \label{fig:q1}
\end{center}
\end{figure}
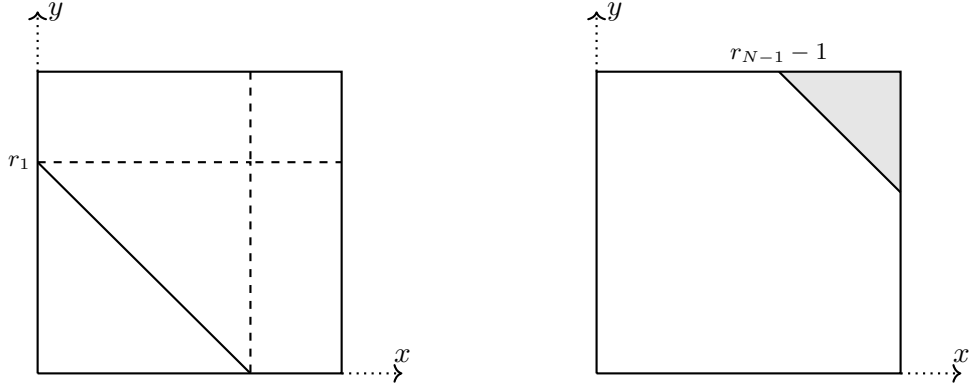

\subsection{The case $i=N$}
Next we study the case $i=N$ which is the simplest case. As illustrated in Figure \ref{fig:q1} (Right) only points in the upper right triangle contribute to the integral such that
\begin{align*}
Q_N=\int_0^1 \int_0^1 q_N^2(x,y) \dd y \dd x &= \int_{r_{N-1}-1}^1 \int_{-x+r_{N-1}}^1 \left( \frac{N g_{5,N-1}(x,y)^2}{2} \right )^2 \dd y \dd x \\
&= \frac{1}{15N}
\end{align*}

\subsection{The case $N/2 < i < N$}
The third case concerns indices $N/2 < i <N$. This case is relatively simple as well, because all the involved volumes of the intersection of the rectangle with a positive halfspace can be calculated via Case 4.
In particular, we have that 
\begin{align*}
\int_0^1 &\int_0^1 q_i^2(x,y) \dd y \dd x \\
&= \int_{r_{i-1}-1}^{r_i -1} \int_{-x+r_{i-1}}^1 \left( \frac{N g_{5,i-1}(x,y)^2}{2} \right )^2 \dd y \dd x \\ 
&+ \int_{r_{i}-1}^1 \left( \int_{-x+r_{i-1}}^{-x+r_i} \left( \frac{N g_{5,i-1}(x,y)^2}{2} \right )^2 \dd y + \int_{-x+r_1}^1 \left( \frac{N g_{5,i-1}(x,y)^2}{2} - \frac{N g_{5,i}(x,y)^2}{2}\right)^2 \dd y \right) \dd x
\end{align*}
Evaluating this gives
\begin{align*}
Q_i=\frac{4 i^3}{15 N}&+\frac{i^2 \left(4 N \sqrt{1-\frac{i}{N}} \sqrt{\frac{-i+N+1}{N}}-12
   N-2\right)}{15 N} \\
   &+\frac{i \left(-8 N^2 \sqrt{1-\frac{i}{N}} \sqrt{\frac{-i+N+1}{N}}+12 N^2+4
   N-3\right)}{15 N} \\
 & +\frac{4 N^3 \sqrt{1-\frac{i}{N}} \sqrt{\frac{-i+N+1}{N}}-4 N^3-2 N^2+3
   N+1}{15 N}
\end{align*}
See Figure \ref{fig:q2} (Right) for an illustration.

\begin{figure}[h]
\begin{center}
\begin{tikzpicture}[scale=0.8]

	\fill[gray!20] (0,4) -- (0,3) -- (3,0)--(4,0);
		\draw[thick] (0,0) -- (5,0)--(5,5) -- (0,5) -- (0,0);
		\draw[thick] (0,4) -- (4,0);
		\draw[thick] (0,3) -- (3,0);
		
		\draw[thick, dashed] (1,5) -- (1,0);
		\draw[thick, dashed] (3,5) -- (3,0);
		\draw[thick, dashed] (4,5) -- (4,0);
		
		\draw[thick, dashed] (0,3) -- (5,3);
		\draw[thick, dashed] (0,4) -- (5,4);

		\node at (-0.3,3) {\footnotesize{$r_{i-1}$}};
		\node at (-0.3,4) {\footnotesize{$r_{i}$}};
		
		\node at (1,-0.3) {\footnotesize{$r_{i}-r_{i-1}$}};
		\node at (3,-0.3) {\footnotesize{$r_{i-1}$}};
		\node at (4,-0.3) {\footnotesize{$r_{i}$}};
		\draw[dotted, thick,->] (5,0)--(6,0);
		\draw[dotted,thick,->] (0,5)--(0,6);
		\node at (6,0.3) {$x$};
		\node at (0.3,6) {$y$};
		
		\node at (0.5,-1) {\footnotesize{A}};
		\node at (2,-1) {\footnotesize{B}};
		\node at (3.5,-1) {\footnotesize{C}};
		\node at (4.5,-1) {\footnotesize{D}};
		
		\draw [decorate, decoration = {brace}, thick] (1,-0.6) --  (0,-0.6);
		\draw [decorate, decoration = {brace}, thick] (3,-0.6) --  (1,-0.6);
		\draw [decorate, decoration = {brace}, thick] (4,-0.6) --  (3,-0.6);
		\draw [decorate, decoration = {brace}, thick] (5,-0.6) --  (4,-0.6);

	\end{tikzpicture} \hspace{2cm}
	\begin{tikzpicture}[scale=0.8]
		%%%%%%%%%%%%%%%%%%%%%%%
		\fill[gray!20] (3,5) -- (2,5) -- (5,2)--(5,3);
		\draw[thick] (0,0) -- (5,0)--(5,5) -- (0,5) -- (0,0);
		\draw[thick] (3,5) -- (5,3);
		\draw[thick] (2,5) -- (5,2);

		\draw[thick, dashed] (2,5) -- (2,0);
		\draw[thick, dashed] (3,5) -- (3,0);
		
		\node at (2,5.6) {\footnotesize{$r_{i-1}-1$}};
		\node at (3,5.3) {\footnotesize{$r_{i}-1$}};
		
		\node at (4,-0.3) {\footnotesize{\textcolor{white}{test}}}; %dummy value to align pictures
		\draw[dotted, thick,->] (5,0)--(6,0);
		\draw[dotted,thick,->] (0,5)--(0,6);
		\node at (6,0.3) {$x$};
		\node at (0.3,6) {$y$};
		
		\node at (4.5,-1) {\footnotesize{\textcolor{white}{test}}};
		
	\end{tikzpicture}
\caption{Left: The case $2\leq i \leq N/2$ with an illustration of the four main cases A, B, C and D. Right: The case $N/2<i<N$.} \label{fig:q2}
\end{center}
\end{figure}
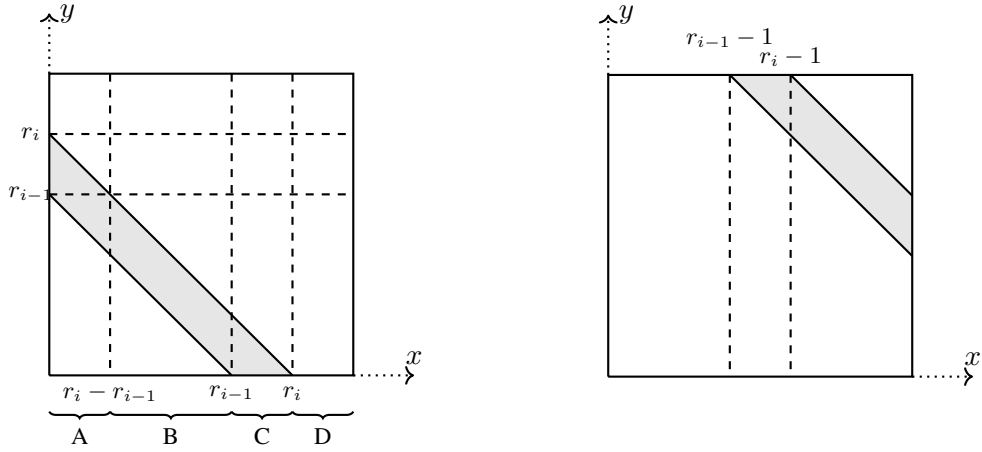

\subsection{The case $2 \leq i \leq N/2$}
The last and most complicated case concerns indices $2 \leq i \leq N/2$. We illustrate this case in Figure \ref{fig:q2} (Left). We split the outer integral over $x$ into four integrals, i.e.,
{\small
\begin{align*}
\int_0^1 &\int_0^1 q_i^2(x,y) \dd y \dd x \\
&=\underbrace{\int_0^{r_i-r_{i-1}} \int_0^1 \ldots }_{\text{Case A} }+ \underbrace{\int_{r_i-r_{i-1}}^{r_{i-1}} \int_0^1 \ldots}_{\text{Case B}} + \underbrace{\int_{r_{i-1}}^{r_i} \int_0^1 \ldots }_{\text{Case C} }+ \underbrace{\int_{r_i}^1 \int_0^1 \ldots}_{\text{Case D}},
\end{align*}
}
\noindent and consider each case separately in the following. In each case, we have to split the inner integral according to the different cases when computing $V_i(x,y)$ for a given index $i$ and a given point $(x,y)$.
The dashed lines in Figure \ref{fig:q2}(Left) together with the two solid boundaries of the gray strip, i.e., the $i$-th set in the partition, show the different cases to consider. The main motivation for this case distinction is the fact that the formula for $V_i$ is the same for all points $(x,y)$ belonging to the same region.

For convenience we define
$$\chi_i(x,y):= \frac{N g_{5,i}(x,y)^2}{2}$$

\paragraph{Case A.}

\begin{align*}
\int_0^{r_i-r_{i-1}} &\int_0^1 q_i^2(x,y) \dd y \dd x \\
&=\int_0^{r_i-r_{i-1}} \left( \int_{-x+r_{i-1}}^{r_{i-1}}  \chi_{i-1}(x,y)^2 \dd y  + 
\int_{r_{i-1}}^{-x+r_{i}}  \left(\chi_{i-1}(x,y)- \chi_{i-1}(0,y) \right )^2 \dd y \right. \\
& \quad + \int_{-x+r_{i}}^{r_{i}}  \left(\chi_{i-1}(x,y)- \chi_{i-1}(0,y) - \chi_i(x,y) \right )^2 \dd y \\
&\quad + \left. \int_{r_{i}}^1  \left(\chi_{i-1}(x,y)- \chi_{i-1}(0,y) - \chi_i(x,y) + - \chi_i(0,y)\right )^2 \dd y \right ) \dd x
\end{align*}

\paragraph{Case B.}

\begin{align*}
\int_{r_i-r_{i-1}}^{r_{i-1}} &\int_0^1 q_i^2(x,y) \dd y \dd x \\
&=\int_{r_i-r_{i-1}}^{r_{i-1}} \left( \int_{-x+r_{i-1}}^{-x+r_i} \chi_{i-1}(x,y)^2 \dd y + 
\int_{-x+r_{i}}^{r_{i-1}}  \left(\chi_{i-1}(x,y)- \chi_{i}(x,y) \right )^2 \dd y \right. \\
&\quad + \int_{r_{i-1}}^{r_i}  \left(\chi_{i-1}(x,y)- \chi_{i}(x,y) - \chi_{i-1}(0,y) \right )^2 \dd y \\
& \quad \left. +  \int_{r_i}^{1} \left( \chi_{i-1}(x,y)- \chi_{i}(x,y) - \chi_{i-1}(0,y) - \chi_{i}(0,y) \right)^2 \dd y\right)\dd x
\end{align*}

\paragraph{Case C.}

\begin{align*}
\int_{r_{i-1}}^{r_i} &\int_0^1 q_i^2(x,y) \dd y \dd x \\
&= \int_{r_{i-1}}^{r_{i}} \left(  \int_{0}^{-x+r_{i}}  \left(\chi_{i-1}(x,y)- \chi_{i-1}(x,0) \right )^2 \dd y \right. \\
&\quad + \int_{-x+r_{i}}^{r_{i-1}} \left(\chi_{i-1}(x,y)- \chi_{i-1}(x,0) - \chi_{i,x,y} \right )^2 \dd y \\
& \quad + \int_{r_{i-1}}^{r_{i}} \left(\chi_{i-1}(x,y)- \chi_{i-1}(x,0) - \chi_{i-1}(0,y)- \chi_i(x,y) \right )^2 \dd y \\
& \quad \left. +  \int_{r_i}^{1} \left( \chi_{i-1}(x,y)- \chi_{i-1}(x,0) - \chi_{i-1}(0,y)- \chi_i(x,y) + \chi_i(0,y) \right)^2 \dd y\right)\dd x
\end{align*}

\paragraph{Case D.}

\begin{align*}
\int_{r_i}^1 & \int_0^1 q_i^2(x,y) \dd y \dd x \\
&= \int_{r_i}^1 \left( \int_{0}^{r_{i-1}} \left( \chi_{i-1}(x,y)- \chi_{i-1}(x,0) - \chi_i(x,y) + \chi_i(x,0) \right)^2 \dd y \right. \\
& \quad + \int_{r_{i-1}}^{r_i} \left( \chi_{i-1}(x,y)- \chi_{i-1}(x,0) - \chi_{i-1}(0,y)- \chi_i(x,y) + \chi_i(x,0) \right)^2 \dd y \\
& \quad \left. + \int_{r_{i}}^{1} \left( \chi_{i-1}(x,y)- \chi_{i-1}(x,0) - \chi_{i-1}(0,y)- \chi_i(x,y) + \chi_i(x,0)+ \chi_i(0,y) \right)^2 \dd y \right ) \dd x\\
\end{align*}

\paragraph{All together.}
We can evaluate the expressions in the four cases and get the following formula:
\begin{align*}
Q_i= &\frac{1}{15N} \left( -4 i^3+i^2 \left(-16 \sqrt{2N} \sqrt{i-1}+4 \sqrt{(i-1)i} +16 \sqrt{2N} \sqrt{i}+10\right) \right. \\
   & \quad +i \left(32 \sqrt{2N}
   \sqrt{i-1}-8 \sqrt{(i-1)i} -40 \sqrt{2N}
   \sqrt{i}+5\right) \\
 & \quad \left.  -16 \sqrt{2N} \sqrt{i-1}+4
   \sqrt{(i-1)i}+10 \sqrt{2 N} \sqrt{i}+15 N-5 \right)
\end{align*}
 
\subsection{A general formula}

Fixing $N$ and substituting the values for the different indices $i$, we can now calculate the value of the discrepancy:
\begin{align} \label{form:main}
\mathbb{E}\mathcal{L}_2^2(\mathcal{P}_{\boldsymbol{\Omega}}) 
\notag &=\frac{1}{N^2}  \left(  \frac{N}{4} -   \sum_{i=1}^N  \int_{[0,1]^2} q_i^2 (\mathbf{x}) \dd \mathbf{x} \right) \\
 &= \frac{1}{4N} - \frac{1}{N^2} \left( 1-\frac{14 \sqrt{2}}{15 \sqrt{N}} + \frac{2}{5N} + \sum_{i=2}^{N-1} Q_i + \frac{1}{15N}\right)
\end{align} 
\begin{remark}
This gives a third way of calculating the expected discrepancy of our stratified samples, i.e., (i) we can generate random, stratified points sets, use Warnock's formula to evaluate them and average the discrepancy over $M$ such point sets, (ii) we can use the numerical approximation \eqref{form:approx}, or (iii) we can use \eqref{form:main} directly to calculate the expected value of the discrepancy. 
\end{remark}

However, the formula in \eqref{form:approx} is still very complicated and it is a non-trivial task to bring it into a form that allows us to prove Theorem \ref{thm:main}. This will be done in the next section.

%%%%%%%%%%%%%%%%%%%%%%%%%% 
\section{Simplifying the general formula} \label{sec:simple}

To simplify our main formula, we need accurate approximations of sums of a particular type. In the following, we first derive such approximations and apply them in a second step to simplify \eqref{form:main}. The simplified formula finally enables us to prove Theorem \ref{thm:main}.

\subsection{Approximating sums}
To derive an asymptotic estimate for our main formula, we need to approximate sums of the form
$$\sum_{i=2}^{n} i^k \sqrt{i-1},$$
for $k=0,\frac{1}{2}, 1, \frac{3}{2}, 2, \ldots$ up to the third leading term.
We use the Euler-MacLaurin formula \cite{apostol} to achieve this goal.

Let
$$ f(x) = x^k \sqrt{x-1}.$$
Applying the Euler-MacLaurin formula gives
\begin{align*}
 \sum_{i=2}^{n} f(i) -   \int_2^n f(x)\dd x &= \frac{f(2) + f(n)}{2} + \frac{1}{6} \frac{f'(n) - f'(2)}{2} +\frac{1}{6} \int_2^n f'''(x) P_3(x) \dd x
\end{align*}
where $P_3(x) = B_3(x- \left\lfloor x \right\rfloor)$ 
is the periodized third Bernoulli function. This function is, trivially, uniformly bounded and thus we have
$$ \left|  \int_2^n f'''(x) P_3(x) \dd x \right| \leq  \mbox{const} \cdot \int_2^n \left| f'''(x) \right| \dd x \leq \mbox{const} \cdot n^{k-3/2}.$$
It remains to evaluate the integral. This can be done in closed form but we can also get asymptotics. Note that
$$ \frac{\sqrt{x-1}}{\sqrt{x}} = \sqrt{1 - \frac{1}{x}} = 1 - \frac{1}{2x} - \frac{1}{8 x^2} - \frac{1}{16 x^3} - \dots$$
and thus
$$ \sqrt{x-1} = \sqrt{x} - \frac{1}{2\sqrt{x}} - \frac{1}{8 x^{3/2}} - \frac{1}{16 x^{5/2}} - \dots$$
which gets us
\begin{align*}
\int_2^n x^k \sqrt{x-1} \dd x = \int_2^n x^{k+1/2} \dd x - \frac{1}{2}  \int_2^n x^{k-1/2} \dd x  - \frac{1}{8}  \int_2^n x^{k-3/2} \dd x + \mathcal{O}(n^{k-3/2})
\end{align*}
and all these three integrals can be evaluated in closed form. We summarise this in the following lemma.
Note that we state the formulas in the subsequent lemma already in the form that we will use later, i.e., we sum indices $i$ with $2\leq i \leq n/2$.

\begin{lemma} \label{lem:sum}
Let $k=1/2$, then
$$\sum_{i=2}^{n/2} \sqrt{i} \sqrt{i-1} =
\frac{n^2}{8}-\frac{n}{4}+\frac{\frac{\sqrt{2} (6 n^2-10n-2)}{\sqrt{n-2} \sqrt{n}}+21}{24 \sqrt{2}}-\frac{1}{8} \log
\left(\frac{n}{4}\right)-1 + \mathcal{O}(n^{-1}) $$
Furthermore, let $k>0$ with $k \neq 1/2$, then
\begin{align*}
\sum_{i=2}^{n/2} i^k \sqrt{i-1} &= -\frac{2^{-k-\frac{5}{2}} \left(4^k-2 n^{k-\frac{1}{2}}\right)}{1-2
  k}-\frac{2^{\frac{1}{2}-k} n^{k+\frac{1}{2}}-2^{k+\frac{3}{2}}}{2 (2  k+1)} 
  +\frac{2^{-k-\frac{1}{2}} \left(n^{k+\frac{3}{2}}-2^{2 k+3}\right)}{2  k+3}\\
  &+\frac{1}{3} 2^{-k-3} \left(\frac{\sqrt{2} (2 k (n-2)+n (6 n-11)) n^{k-1}}{\sqrt{n-2}}+11\cdot 4^k-4^k k\right) + \mathcal{O}(n^{k-3/2})
  \end{align*}
\end{lemma}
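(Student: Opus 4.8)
\textbf{Proof plan for Lemma \ref{lem:sum}.}

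The plan is to apply the Euler–MacLaurin machinery exactly as set up in the paragraph preceding the lemma, but carry the computation through with the summation range $2 \le i \le n/2$ rather than $2 \le i \le n$. Write $f(x) = x^k\sqrt{x-1}$ and use
$$\sum_{i=2}^{n/2} f(i) = \int_2^{n/2} f(x)\,\dd x + \frac{f(2)+f(n/2)}{2} + \frac{1}{12}\bigl(f'(n/2)-f'(2)\bigr) + \frac{1}{6}\int_2^{n/2} f'''(x)P_3(x)\,\dd x.$$
The error term is controlled exactly as before: $\bigl|\int_2^{n/2} f'''(x)P_3(x)\,\dd x\bigr| \le \mathrm{const}\cdot\int_2^{n/2}|f'''(x)|\,\dd x \le \mathrm{const}\cdot n^{k-3/2}$, since $f'''(x) = \Theta(x^{k-5/2})$ for large $x$ and the integral of $x^{k-5/2}$ from $2$ to $n/2$ is $O(n^{k-3/2})$ when $k \neq 3/2$ (and $O(\log n) = O(n^{k-3/2})$ is false there, but $k=3/2$ is not among the cases we need, or one absorbs it into a marginally weaker but still sufficient bound). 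So everything reduces to expanding the three pieces that survive: the main integral, the boundary value $\tfrac12 f(n/2)$, and the derivative boundary value $\tfrac1{12}f'(n/2)$, each to the precision $O(n^{k-3/2})$.

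For the main integral I would substitute the expansion $\sqrt{x-1} = \sqrt{x} - \tfrac{1}{2\sqrt{x}} - \tfrac{1}{8}x^{-3/2} - \cdots$ derived in the text, giving
$$\int_2^{n/2} x^k\sqrt{x-1}\,\dd x = \int_2^{n/2}\! x^{k+1/2}\,\dd x - \frac12\int_2^{n/2}\! x^{k-1/2}\,\dd x - \frac18\int_2^{n/2}\! x^{k-3/2}\,\dd x + O(n^{k-3/2}).$$
For $k \neq 1/2$ all three integrals are elementary powers, evaluated via $\int_2^{n/2} x^{a}\,\dd x = \frac{(n/2)^{a+1} - 2^{a+1}}{a+1}$, and after collecting the $(n/2)^{k+3/2}$, $(n/2)^{k+1/2}$, $(n/2)^{k-1/2}$ terms together with the constants $2^{a+1}/(a+1)$ one obtains, after rewriting $(n/2)^{m} = 2^{-m}n^{m}$, the first three displayed terms of the second formula. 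The boundary contributions $\tfrac12 f(n/2) = \tfrac12(n/2)^k\sqrt{n/2-1}$ and $\tfrac1{12}f'(n/2)$ are then expanded: using $\sqrt{n/2-1} = \sqrt{n/2}\,\sqrt{1-2/n} = \sqrt{n/2}\,(1 - 1/n - \cdots)$ these become polynomials in $n^{1/2}$ times $\sqrt{n-2}^{-1}$ after the author's chosen normalization, and they assemble into the fourth bracketed term $\tfrac13 2^{-k-3}\bigl(\tfrac{\sqrt2(2k(n-2)+n(6n-11))n^{k-1}}{\sqrt{n-2}} + 11\cdot 4^k - 4^k k\bigr)$; the constant pieces $-\tfrac12(2)^k\sqrt{1}$ and $-\tfrac1{12}f'(2)$ together with the $2^{a+1}/(a+1)$ constants from the integral account for the remaining constant summands. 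The case $k = 1/2$ is handled identically except that $\int_2^{n/2} x^{k-1/2}\,\dd x = \int_2^{n/2} x^0\,\dd x$ is replaced — no, rather it is $\int_2^{n/2} x^{-1/2}\cdot x^{1/2}$ — more precisely the middle integral $-\tfrac12\int_2^{n/2} x^{0}\,\dd x$ is fine but the $-\tfrac18\int_2^{n/2} x^{-1}\,\dd x = -\tfrac18\log(n/4)$ term is now logarithmic, which is the source of the $-\tfrac18\log(n/4)$ appearing in the first formula; the rest of the bookkeeping is the same.

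The main obstacle is purely organizational rather than conceptual: one must keep track of a proliferation of constants — the $2^{a+1}/(a+1)$ lower-endpoint contributions from three integrals, the $f(2)$ and $f'(2)$ terms, and the subleading coefficients in the $\sqrt{1-2/n}$ expansion of the upper-endpoint boundary terms — and verify that they collapse into exactly the closed forms stated, in particular that the coefficient of $n^{k-1}/\sqrt{n-2}$ is precisely $\tfrac{\sqrt2}{3}2^{-k-3}(2k(n-2)+n(6n-11))$ rather than something off by a factor. I would do this by treating $k$ symbolically, expanding each endpoint term to the order $n^{k-1/2}$ inclusive (anything smaller goes into the $O(n^{k-3/2})$ remainder), and matching coefficients of $n^{k+3/2}, n^{k+1/2}, n^{k-1/2}$ and of the constant, $\log n$ terms separately; a consistency check against small explicit values of $k$ (say $k=1$) and a numerical check of the sum for a modest $n$ would confirm no bookkeeping error has crept in.
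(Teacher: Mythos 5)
Your plan is essentially identical to the paper's own proof: the paper likewise obtains the lemma by applying Euler--MacLaurin to $f(x)=x^k\sqrt{x-1}$ over the range up to $n/2$, bounding the periodized-Bernoulli remainder by $\int|f'''(x)|\,\dd x$, and expanding $\sqrt{x-1}=\sqrt{x}-\tfrac{1}{2\sqrt{x}}-\tfrac18 x^{-3/2}-\cdots$ so that the main integral reduces to elementary power integrals (logarithmic in the $k=1/2$ case), with the stated closed forms coming from the same bookkeeping of endpoint and constant terms. One small correction: $k=3/2$ \emph{is} among the exponents actually needed (it appears in the sum $S_1$), but your concern there is moot, since for $k=3/2$ the coefficient of $x^{k-5/2}$ in $f'''$ vanishes and $\int_2^{n/2}|f'''(x)|\,\dd x$ is in fact $O(1)$.
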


\subsection{Generalised Harmonic Numbers}
The Euler-MacLaurin formula can also be applied to approximate generalised Harmonic numbers (also known as Faulhaber formulas if the exponent $k$ is a positive integer):
$$\sum_{i=1}^n i^k.$$
In particular, it can be shown that:
\begin{lemma} \label{lem:harmonic}
$$\sum_{i=1}^n i^k = \zeta(-k) + \frac{n^{k+1}}{1+k} + \frac{n^k}{2} + \frac{k\cdot n^{k-1}}{12} + \mathcal{O}(n^{k-2}),$$
in which $\zeta$ denotes the Riemann zeta function.
\end{lemma}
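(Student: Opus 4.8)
The plan is to obtain the expansion from the exact identity
\[
\sum_{i=1}^{n} i^{k} \;=\; \zeta(-k) \;-\; \zeta(-k,\,n+1),
\]
where $\zeta(s,a)=\sum_{m\ge 0}(m+a)^{-s}$ denotes the Hurwitz zeta function. First I would establish this identity. For $\Re(s)>1$ one has $\zeta(s)-\zeta(s,n+1)=\sum_{m\ge0}(m+1)^{-s}-\sum_{m\ge0}(m+n+1)^{-s}=\sum_{i=1}^{n}i^{-s}$; the right-hand side is a finite sum of entire functions of $s$, while the only singularity of $\zeta(s,a)$ is a simple pole at $s=1$ whose residue $1$ does not depend on $a$ and hence cancels on the left. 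Thus both sides are entire, and analytic continuation yields the displayed identity for all $s\in\mathbb{C}$, in particular at $s=-k$.

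Second, I would insert the classical Euler--MacLaurin expansion of the Hurwitz zeta function as $a\to\infty$, truncated after the $B_2$-term,
\[
\zeta(s,a) \;=\; \frac{a^{1-s}}{s-1} + \frac{a^{-s}}{2} + \frac{s}{12}\,a^{-s-1} + \mathcal{O}\!\left(a^{-\Re s-3}\right),
\]
which is derived by exactly the same Euler--MacLaurin argument as in Lemma~\ref{lem:sum}, applied to $x\mapsto(x+a)^{-s}$ (see \cite{apostol}). Taking $s=-k$ and $a=n+1$ gives
\[
\zeta(-k,\,n+1) = -\frac{(n+1)^{k+1}}{k+1} + \frac{(n+1)^{k}}{2} - \frac{k(n+1)^{k-1}}{12} + \mathcal{O}(n^{k-3}),
\]
and substituting into the identity of the first step,
\[
\sum_{i=1}^{n} i^{k} = \zeta(-k) + \frac{(n+1)^{k+1}}{k+1} - \frac{(n+1)^{k}}{2} + \frac{k(n+1)^{k-1}}{12} + \mathcal{O}(n^{k-3}).
\]
A short binomial expansion of $(n+1)^{k+1}$, $(n+1)^{k}$ and $(n+1)^{k-1}$ in powers of $1/n$ then rearranges this into the claimed form: the $n^{k}$ term produced by $\frac{(n+1)^{k+1}}{k+1}$ cancels half of $-\frac{(n+1)^{k}}{2}$ to leave $+\frac{n^{k}}{2}$, the three contributions $\frac{k}{2}-\frac{k}{2}+\frac{k}{12}$ to the $n^{k-1}$-coefficient combine to $\frac{k}{12}$, and everything beyond is absorbed into $\mathcal{O}(n^{k-2})$.

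An alternative, entirely self-contained route is to apply Euler--MacLaurin directly to $f(x)=x^{k}$ exactly as in Lemma~\ref{lem:sum}: the integral $\int_1^n x^{k}\,\dd x=\frac{n^{k+1}-1}{k+1}$, the endpoint term $\frac{f(1)+f(n)}{2}$ and the term $\frac{1}{12}\bigl(f'(n)-f'(1)\bigr)$ already supply the three displayed powers of $n$, while $f'''(x)=k(k-1)(k-2)x^{k-3}$ makes the remainder $\mathcal{O}(n^{k-2})$ once one splits off $\frac16\int_1^\infty f'''(x)P_3(x)\,\dd x$ (and takes a few more Bernoulli terms when $k\ge 2$). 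The step I expect to be the main obstacle is precisely the identification of the resulting $n$-independent constant with $\zeta(-k)$: this is nothing other than the Euler--MacLaurin-based analytic continuation of the zeta function, which is why I would rather route the argument through the Hurwitz identity above, where $\zeta(-k)$ occurs literally. One should only keep in mind that for integer $k$ the ``bare'' Euler--MacLaurin constant differs from $\zeta(-k)$ by a bounded quantity, which is harmless since it disappears into the $\mathcal{O}(n^{k-2})$ error.
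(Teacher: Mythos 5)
Your argument is correct. The paper offers essentially no proof of Lemma \ref{lem:harmonic} beyond the remark that the Euler--MacLaurin formula ``can also be applied'' to $\sum_{i=1}^n i^k$; its intended route is therefore exactly your ``alternative, self-contained'' one: apply Euler--MacLaurin to $f(x)=x^k$ as in Lemma \ref{lem:sum} and identify the emerging $n$-independent constant with $\zeta(-k)$. Your primary route, through the exact identity $\sum_{i=1}^{n} i^{k} = \zeta(-k)-\zeta(-k,n+1)$ combined with the large-$a$ expansion of the Hurwitz zeta function, is a genuinely different (and cleaner) packaging of the same analysis: it isolates the one step the paper glosses over --- why the constant is precisely $\zeta(-k)$ --- into a one-line analytic-continuation argument, at the cost of importing the standard (itself Euler--MacLaurin-derived) asymptotic expansion of $\zeta(s,a)$. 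The bookkeeping with $n+1$ versus $n$ is carried out correctly: the coefficients $1-\tfrac12=\tfrac12$ for $n^{k}$ and $\tfrac{k}{2}-\tfrac{k}{2}+\tfrac{k}{12}=\tfrac{k}{12}$ for $n^{k-1}$ reproduce the stated expansion, and the result checks against Faulhaber's formulas for $k=0,1,2$.

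One small inaccuracy in your closing sentence: a nonzero constant is \emph{not} $\mathcal{O}(n^{k-2})$ when $k<2$, and the paper applies the lemma with $k=0,\tfrac12,1,\tfrac32$ among others, so if the ``bare'' Euler--MacLaurin constant really did differ from $\zeta(-k)$, the direct route would break rather than be harmlessly absorbed. In fact it does not differ: the Euler--MacLaurin constant attached to $\sum i^{-s}$ \emph{is} $\zeta(s)$ exactly, for integer and non-integer $k$ alike --- this is precisely the standard construction of the analytic continuation, and it is also what your Hurwitz identity encodes. So the hedge is unnecessary, and its stated justification is wrong; since your main argument does not rely on it, nothing is lost.
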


\subsection{A supporting function}
To simplify \eqref{form:main} we introduce several supporting functions, i.e., for $2\leq i \leq N$ we define
$$g(i):= Q_i + Q_{N+1-i}.$$
Using the general formulas, we can evaluate and simplify this and obtain
\begin{align*}
g(i)= -\frac{8 i^3}{15 N}&+\frac{i^2 \left(-16 \sqrt{2} N
   \sqrt{\frac{i-1}{N}}+8 N \sqrt{\frac{i-1}{N}}
   \sqrt{\frac{i}{N}}+16 \sqrt{2} N \sqrt{\frac{i}{N}}+20\right)}{15
   N} \\
   &+\frac{i \left(32 \sqrt{2} N \sqrt{\frac{i-1}{N}}-16 N
   \sqrt{\frac{i-1}{N}} \sqrt{\frac{i}{N}}-40 \sqrt{2} N
   \sqrt{\frac{i}{N}}\right)}{15 N} \\
   &+\frac{-16 \sqrt{2} N
   \sqrt{\frac{i-1}{N}}+8 N \sqrt{\frac{i-1}{N}}
   \sqrt{\frac{i}{N}}+10 \sqrt{2} N \sqrt{\frac{i}{N}}+15 N-5}{15 N}
 \end{align*}
Now, by definition we have
\begin{align*}
\sum_{i=2}^{N-1} Q_i &= \sum_{i=2}^{N/2} g(i) \\
&=- \sum_{i=2}^{N/2} \frac{8 i^3}{15 N} +\sum_{i=2}^{N/2} \frac{i^2 \left(-16 \sqrt{2} N
   \sqrt{\frac{i-1}{N}}+8 N \sqrt{\frac{i-1}{N}}
   \sqrt{\frac{i}{N}}+16 \sqrt{2} N \sqrt{\frac{i}{N}}+20\right)}{15
   N}  \\
  &+ \sum_{i=2}^{N/2} \frac{i \left(32 \sqrt{2} N \sqrt{\frac{i-1}{N}}-16 N
   \sqrt{\frac{i-1}{N}} \sqrt{\frac{i}{N}}-40 \sqrt{2} N
   \sqrt{\frac{i}{N}}\right)}{15 N} \\
   & +\sum_{i=2}^{N/2} \frac{-16 \sqrt{2} N
   \sqrt{\frac{i-1}{N}}+8 N \sqrt{\frac{i-1}{N}}
   \sqrt{\frac{i}{N}}+10 \sqrt{2} N \sqrt{\frac{i}{N}}+15 N-5}{15 N} \\
   &= S_3 + S_2 + S_1 + S_0,
\end{align*}
in which
\begin{align*}
S_3&:= -\sum_{i=2}^{N/2} \frac{8 i^3}{15 N}\\
S_2&:=\sum_{i=2}^{N/2} \frac{i^2 \left(-16 \sqrt{2} N
   \sqrt{\frac{i-1}{N}}+8 N \sqrt{\frac{i-1}{N}}
   \sqrt{\frac{i}{N}}+16 \sqrt{2} N \sqrt{\frac{i}{N}}+20\right)}{15
   N}\\
S_1&:=\sum_{i=2}^{N/2} \frac{i \left(32 \sqrt{2} N \sqrt{\frac{i-1}{N}}-16 N
   \sqrt{\frac{i-1}{N}} \sqrt{\frac{i}{N}}-40 \sqrt{2} N
   \sqrt{\frac{i}{N}}\right)}{15 N} \\
S_0&:=\sum_{i=2}^{N/2} \frac{-16 \sqrt{2} N
   \sqrt{\frac{i-1}{N}}+8 N \sqrt{\frac{i-1}{N}}
   \sqrt{\frac{i}{N}}+10 \sqrt{2} N \sqrt{\frac{i}{N}}+15 N-5}{15 N}
\end{align*}
To prove Theorem \ref{thm:main} as well as Corollary \ref{cor:conj} we investigate and simplify each of the sums in the following.

\paragraph{The sum $S_3$. }
The first sum is straight forward to evaluate and we get:
\begin{align*}
S_3= - \sum_{i=2}^{N/2} \frac{8 i^3}{15 N} = \frac{-N^4-4 N^3-4 N^2+64}{120 N} = -\frac{N^3}{120} - \frac{N^2}{30}- \frac{N}{30} + \frac{8}{15N}
\end{align*}

\begin{remark}
This sum reveals the complexity of our task. Recall from the general formula for the discrepancy, that we need to divide $S_3+S_2+S_1+S_0$ by $N^2$ to obtain the final result. However, at the same time, recall that numerical results indicate that the expected discrepancy is $\frac{5}{72N}$. Hence, we see that the first two leading terms, i.e., $\mathrm{const}\cdot n^3$ and $\mathrm{const}\cdot n^2$ have to cancel in $S_3+S_2+S_1+S_0$. This is the reason why we need to develop all our approximations to the third leading term.
\end{remark}

\paragraph{The sum $S_2$.}
\begin{align*}
S_2&= \sum_{i=2}^{N/2} \frac{i^2 \left(-16 \sqrt{2} N
   \sqrt{\frac{i-1}{N}}+8 n \sqrt{\frac{i-1}{N}}
   \sqrt{\frac{i}{N}}+16 \sqrt{2} N \sqrt{\frac{i}{N}}+20\right)}{15
   N} \\
   &= \frac{20}{15N} \sum_{i=2}^{N/2} i^2 - \frac{16 \sqrt{2}}{15 \sqrt{N}} \sum_{i=2}^{N/2} i^2 \sqrt{i-1} +\frac{16 \sqrt{2}}{15 \sqrt{N}}  \sum_{i=2}^{N/2} i^{5/2} + \frac{8}{15N} \sum_{i=2}^{N/2} i^{5/2} \sqrt{i-1}
\end{align*}
Applying Lemma \ref{lem:sum} and \ref{lem:harmonic} we can simplify this expression to
$$S_2 =  \frac{N^3}{120} - \frac{N^{5/2}}{10\sqrt{N-2}} + \frac{52N^2}{225} + \frac{11N^{3/2}}{90\sqrt{N-2}}+\frac{113N}{360} + \mathcal{O}(N^{1/2}).$$

\paragraph{The sum $S_1$. }
\begin{align*}
S_1 &= \sum_{i=2}^{N/2} \frac{i \left(32 \sqrt{2} N \sqrt{\frac{i-1}{N}}-16 N
   \sqrt{\frac{i-1}{N}} \sqrt{\frac{i}{N}}-40 \sqrt{2} N
   \sqrt{\frac{i}{N}}\right)}{15 N}\\
&=\frac{1}{15 \sqrt{N}} \sum_{i=2}^{N/2} \left( i \left(32 \sqrt{2} \sqrt{i-1}-16
   \sqrt{i-1} \sqrt{\frac{i}{N}}-40 \sqrt{2}
   \sqrt{i}\right) \right) \\
   &= \frac{32 \sqrt{2}}{15 \sqrt{N}} \sum_{i=2}^{N/2} i \sqrt{i-1} - \frac{16}{15N}\sum_{i=2}^{N/2} i^{3/2} \sqrt{i-1} - \frac{8 \sqrt{2}}{3 \sqrt{N}} \left( -1 + \sum_{i=1}^{N/2} i^{3/2} \right )
\end{align*}
Applying Lemma \ref{lem:sum} and \ref{lem:harmonic} we can simplify this expression to
$$S_1= -\frac{22N^2}{225} +\frac{2N^{3/2}}{5 \sqrt{N-2}} - \frac{43N}{45} + \mathcal{O}(N^{1/2})$$

\paragraph{The sum $S_0$. }
\begin{align*}
S_0 &=\sum_{i=2}^{N/2} \frac{-16 \sqrt{2} N
   \sqrt{\frac{i-1}{N}}+8 N \sqrt{\frac{i-1}{N}}
   \sqrt{\frac{i}{N}}+10 \sqrt{2} N \sqrt{\frac{i}{N}}+15 N-5}{15 N} \\
   &= \sum_{i=2}^{N/2} \frac{15N-5}{N} - \frac{16\sqrt{2}}{15\sqrt{N}} \sum_{i=2}^{N/2} \sqrt{i-1} + \frac{10\sqrt{2}}{15 \sqrt{N}} \sum_{i=2}^{N/2} \sqrt{i} + \frac{8}{15N} \sum_{i=2}^{N/2} \sqrt{i}\sqrt{i-1}
\end{align*}
Applying Lemma \ref{lem:sum} and \ref{lem:harmonic} we can simplify this expression to
$$S_0=\frac{71 N}{90}-\frac{16}{45} \sqrt{2} \sqrt{\frac{N}{2}-1} \sqrt{N} + \mathcal{O}(N^{1/2})$$

\paragraph{Alltogether}

\begin{align*}
\sum_{i=2}^{N-1} Q_i &=S_3 + S_2 + S_1 + S_0 \\
&= -\frac{N^{5/2}}{10 \sqrt{N-2}}+\frac{N^2}{10}+\frac{47 N^{3/2}}{90 \sqrt{N-2}}+\frac{41
   N}{360}-\frac{16}{45} \sqrt{2} \sqrt{\frac{N}{2}-1} \sqrt{N} + \mathcal{O}(N^{1/2})
\end{align*}

Note that this can be further simplified:
\begin{align*}
-\frac{N^{5/2}} {10 \sqrt{N-2}}+\frac{N^2}{10} &= \frac{N^2}{10} \left(1 - \frac{\sqrt{N}}{\sqrt{N-2}} \right) \\
&= \frac{N^2}{10} \left(1 - \sqrt{1+\frac{2}{N-2}} \right) = \frac{N^2}{10} \left(1 - \left(1+\frac{1}{N-2} +\mathcal{O}(N^{-2}) \right)  \right) \\
&=-\frac{N}{10} + \mathcal{O}(1)
\end{align*}

Similarly,
$$\frac{47 N^{3/2}}{90 \sqrt{N-2}} = \frac{47N}{90} + \mathcal{O}(N^{-1}), $$
and 
$$-\frac{16}{45} \sqrt{2} \sqrt{\frac{N}{2}-1} \sqrt{N} = -\frac{16N}{45} + \mathcal{O}(1). $$

Therefore, we get
\begin{align} \label{sum:Q}
\notag \sum_{i=2}^{N-1} Q_i &=S_3 + S_2 + S_1 + S_0 \\
\notag &= -\frac{N}{10} +\frac{47N}{90} + \frac{41N}{360} - \frac{16N}{45} + \mathcal{O}(N^{1/2}) \\
&= \frac{13N}{72} + \mathcal{O}(N^{1/2})
\end{align}

\subsection{Proof of Theorem \ref{thm:main}}

From \eqref{form:main} and \eqref{sum:Q}
\begin{align*}
\mathbb{E}\mathcal{L}_2^2(\mathcal{P}_{\boldsymbol{\Omega}}) 
&= \frac{1}{4N} - \frac{1}{N^2} \left( 1-\frac{14 \sqrt{2}}{15 \sqrt{N}} + \frac{2}{5N} + \sum_{i=2}^{N-1} Q_i + \frac{1}{15N}\right)\\
&=\frac{1}{4N} - \frac{1}{N^2} \left(\frac{13N}{72} + \mathcal{O}(N^{1/2}) \right) \\
&= \frac{5}{72N} + \mathcal{O}(N^{-3/2})
\end{align*}

\begin{remark}
We can extend Theorem \ref{thm:main} to $N$ odd as follows. We can follow the lines of the proof for $N$ even and distinguish four cases for $Q_i$, i.e., $i=1$, $2\leq i < (n+1)/2$, $(n+1)/2<i<n-1$ and $i=n-1$. In addition, we need to add a separate analysis for the set $\Omega_{(n+1)/2}$. It will turn out that the contribution of this set is asymptotically of a lower order and, hence, the final result coincides with the case $N$ even.
\end{remark}

\section*{Acknowledgements}
I would like to thank Markus Kiderlen and Nathan Kirk for fruitful discussions as well as Stefan Steinerberger for help with Lemma \ref{lem:sum}. Furthermore, I wish to thank an anonymous reviewer for pointing out several inaccuracies in an earlier version of the manuscript.

%%%%%%%%%%%%%%%%%%%%%%%%

%%%%%%%%%%%%%%%%%%%%%%%%


\begin{thebibliography}{20}

\bibitem{aist} C. Aistleitner, Covering numbers, dyadic chaining and discrepancy. J. Complexity 27 (2011), 531--540.

\bibitem{apostol} T. M. Apostol, An Elementary View of Euler's Summation Formula, Am. Math. Monthly 106:5 (1999), 409--418.

\bibitem{chen1} W.W.L. Chen, M.M.Skriganov, Explicit constructions in the classical mean squares problem in irregularity of point distribution. J. Reine Angew. Math., 545 (2002), 67--95.

\bibitem{cho} Y. Cho, S. Kim, Volume of Hypercubes Clipped by Hyperplanes and Combinatorial Identities. Electronic Journal of Linear Algebra, 36 (2020), 228--255.

\bibitem{dave} H. Davenport, Note on irregularities of distribution. Mathematika 3 (1956),131--135.

\bibitem{DP} J. Dick, F. Pillichshammer, Digital Nets and Sequences, Cambridge Univ. Press, Cambridge, England, 2010.

\bibitem{DP2} J. Dick, F. Pillichshammer, Explicit constructions of point sets and sequences with low discrepancy, Kritzer, Peter (ed.) et al., Uniform distribution and quasi-Monte Carlo methods. Discrepancy, integration and applications. Radon Series on Computational and Applied Mathematics 15, 63-86 (2014).

\bibitem{DP3} J. Dick and F. Pillichshammer, Optimal $\mathcal{L}_2$-discrepancy bounds for higher order digital sequences over the finite field $\mathbb{F}_2$, Acta Arith. 162, No. 1 (2014), 65--99.

\bibitem{doerr1} 
B. Doerr, A lower bound for the discrepancy of a random point set. J. Complexity 30 (2014), 16--20.

\bibitem{gnewuch} M. Gnewuch, H. Pasing, C. Weiss, A generalized Faulhaber inequality, improved bracketing covers and applications to discrepancy. Math. Comput 90, No. 332 (2021), 2873--2898.

\bibitem{hnww} 
S. Heinrich, E. Novak, G. Wasilkowski and H. Wozniakowski, 
The inverse of the star-discrepancy depends linearly on the dimension. 
Acta Arith. 96 (2001), no. 3, 279--302. 

\bibitem{MKFP21} M. Kiderlen, F. Pausinger, Discrepancy of stratified samples from partitions of the unit cube, Monatsh. Math. 195:2 (2021), 267--306.

\bibitem{NThesis} N. Kirk, Several Problems in Discrepancy Theory: Lower Bounds and Stratified Sampling. Phd Thesis, Queen's University Belfast, 2023.

\bibitem{NF23} N. Kirk, F. Pausinger, On the expected $\mathcal{L}_2$-discrepancy of jittered sampling. Unif. Distrib. Theory 18 (2023), no 1, 39--64.

\bibitem{MKFP22} M. Kiderlen, F. Pausinger, On a partition with lower expected $\mathcal{L}_2$-discrepancy than classical jittered sampling, J. Complexity 70 (2022), 101616, 13p.

\bibitem{roth} K. F. Roth, On irregularities of distribution. Mathematika 1 (1954), 73--79.

\end{thebibliography}
\end{document}